\newtheorem{lm}{Lemma}[section]
\newtheorem{rem}[lm]{Remark}
\newtheorem{pr}[lm]{Proposition}
\newtheorem{thm}[lm]{Theorem}
\begin{document}

\title{Maharam traces on von Neumann algebras}
\author{Vladimir Chilin}
\author{Botir Zakirov}
\address{Vladimir Chilin \\Department of Mathematics, National University of Uzbekistan\\
Vuzgorodok, 100174 Tashkent, Uzbekistan} \email{{\tt
chilin@ucd.uz}}
\address{Botir Zakirov \\Tashkent Railway Engineering Institute \\
Odilhodjaev str. 1, 100167 Tashkent, Uzbekistan} \email{{\tt
botirzakirov@list.ru}}
\date{}

\begin{abstract}
Traces $\Phi$ on von Neumann algebras with values in complex order
complete vector lattices are considered. The full description of
these traces is given for the case when $\Phi$ is the Maharam
trace. The version of Radon-Nikodym-type theorem for Maharam
traces is established.\\

\noindent{\em Mathematics Subject Classification (2000).} 28B15,
46L50\\

\noindent{\em Keywords}: von Neumann algebra, measurable operator,
vector-valued trace, order complete vector lattice,
Radon-Nikodym-type theorem.

\end{abstract}

\maketitle

\section{Introduction}

The theory of integration for measures $\mu$ with values in order
complete vector lattices has inspired  the study   of
$(bo)$-complete lattice-normed spaces $L^p(\mu)$ (see, for
example, \cite{Ku1}, 6.1.8). The spaces $L^p(\mu)$ are the
Banach-Kantorovich spaces if  the measure $\mu$ possesses the
Maharam property.  In the proof of this fact, description of
Maharam operators acting in order complete vector lattices plays
an important role  (\cite{Ku1}, 3.4.3).

The existence of the center-valued traces in finite von Neumann
algebras makes it natural to construct  the theory of integration
for traces with values in the complex order complete vector
lattice $F_\mathbb{C}=F\oplus iF.$ If the  von Neumann algebra is
commutative, then construction of $F_\mathbb{C}$-valued
integration for it is the component part for the investigation of
the properties of order continuous maps of vector lattices.

Let $M$ be a non-commutative von Neumann algebra, let
$F_\mathbb{C}$ be a von Neumann subalgebra in the center of  $M$
and let $\Phi: M\to F_\mathbb{C}$ be a trace with modularity
property: $\Phi(zx)=z\Phi(x)$ for all $z\in F_\mathbb{C},~x\in M.$
It is known that the non-commutative $L^p$-space $L^p(M,\Phi)$ is
a Banach-Kantorovich space \cite{Chil.G.1.}, \cite{Chil.K.}. In
addition, $\Phi$ possesses the Maharam property:  if $0\leq z\leq
\Phi(x),~z\in F_\mathbb{C},~0\leq x\in M,$ then there exists
$0\leq y \leq x$ such that $\Phi(y)=z $ (compare with \cite{Ku1},
3.4.1).

In the present article, we will study  the faithful normal traces
$\Phi$ on a von Neumann algebra $M$ with values in an arbitrary
complex order complete vector lattice. We give the full
description of such traces in the case when $\Phi$ is a Maharam
trace. With the help of the locally measure topology in the
algebra $S(M)$ of all measurable operators  we construct the
Banach-Kantorovich space  $L^1(M,\Phi) \subset S(M).$ We also
state the version of Radon-Nikodym-type theorem for Maharam
traces.

We use the terminology and results of the  von Neumann algebras
theory  (see \cite{SZ.}, \cite{Take1.}),  measurable operators
theory (see \cite{Seg.}, \cite{Mur_m}) and  order complete vector
lattices and Banach-Kantorovich spaces  theory (see \cite{Ku1}).

\section{Preliminaries}

Let $H$ be a Hilbert space, let $B(H)$ be the $*$-algebra of all
bounded linear operators on $H,$ and $\mathbf{1}$ be the identity
operator on $H.$ Let $M$ be a von Neumann algebra acting on $H,$
let $Z(M)$ be the center of $M$ and $P(M)$ be the lattice of all
projectors in $M$. We  denote by $P_{fin}(M)$ the set of all
finite projectors in $M.$

A densely-defined  closed linear operator $x$ (possibly unbounded)
affiliated with $M$ is said to be \emph{measurable} if there
exists a sequence $\{p_n\}_{n=1}^{\infty}\subset P(M)$ such that
$p_n\uparrow \mathbf{1}$, \ $p_n(H)\subset \mathfrak{D}(x)$ and
$p_n^\bot=\mathbf{1}-p_n \in P_{fin}(M) $ for every $n=1,2,\ldots$
(here $\mathfrak{D}(x)$ is the domain of $x$). Let us denote by
$S(M)$ the set  of all measurable  operators.

Let $x,y$ be measurable  operators. Then $x+y,~xy$ and $x^*$ are
densely-defined and preclosed. Moreover, the closures
$\overline{x+y}$ (strong sum), $\overline{xy}$ (strong product)
and $x^*$ are again measurable, and $S(M)$ is a  $*$-algebra with
respect to the strong sum, strong product, and the adjoint
operation (see \cite{Seg.}). It is clear that $M$ is a
$*$-subalgebra in $S(M)$. For any subset $A\subset S(M),$ let
$A_h=\{x\in A: x=x^*\},$ $ A_+=\{x\in A: (x\xi,\xi)\geq 0 \mbox{
for all}~ \xi\in \mathfrak{D}(x)\}.$

Let  $x\in S(M)$  and  $x=u|x|$ be the polar decomposition, where
$|x|=(x^*x)^{\frac{1}{2}},$ $u$ is a partial isometry in $B(H).$
Then $u\in M$ and $|x|\in S(M).$ If $x\in S_h(M)$ and
$\{E_\lambda(x)\}$ are the spectral  projections of $x,$ then
$\{E_\lambda(x)\}\subset P(M).$

Let $M$ be a commutative von Neumann algebra. Then $M$ admits  a
faithful semi-finite normal trace $\tau,$  and $M$ is
$*$-isomorphic to the $*$-algebra $L^\infty(\Omega,\Sigma,\mu)$ of
all  bounded complex measurable functions  with the identification
almost everywhere, where $(\Omega,\Sigma,\mu)$ is a measurable
space.  In addition, $\mu(A)=\tau(\chi_A),~A\in \Sigma.$ Moreover,
$S(M)\cong L^0(\Omega,\Sigma,\mu),$ where $L^0(\Omega,\Sigma,\mu)$
is the $*$-algebra  of all complex measurable functions  with the
identification almost everywhere \cite{Seg.}.

The locally measure topology  $t(M)$ on $L^0(\Omega,\Sigma,\mu)$
is by definition the linear (Hausdorff) topology whose fundamental
system of neighborhoods around $0$ is given by
$$
W(B,\varepsilon,\delta)=\{f\in\ L^0(\Omega,\, \Sigma,\, \mu)
\colon \hbox{ there exists a set } \ E\in \Sigma, \mbox{ such that
}
$$
$$
 \ E\subseteq B, \ \mu(B\setminus E)\leqslant\delta, \
f\chi_E \in L^\infty(\Omega,\Sigma,\mu), \
\|f\chi_E\|_{{L_\infty}(\Omega,\Sigma,\mu)}\leqslant\varepsilon\}.
$$
Here \ $\varepsilon, \ \delta $ run over all strictly positive
numbers and  $B\in\Sigma$, \ $\mu(B)<\infty.$ It is known that
$(S(M),t(M))$ is a complete topological $*$-algebra.

It is clear that zero neighborhoods $W(B,\varepsilon,\delta)$ are
closed and have the following  property: if $f\in W(B,\varepsilon,
\delta),\, g\in L^\infty(\Omega,\Sigma,\mu),
\|g\|_{L{\infty}(\Omega,\Sigma,\mu)}\leq 1,$ then $gf\in
W(B,\varepsilon,\delta).$

A net $\{f_\alpha\}$ converges to $f$ locally in measure
(notation: $f_\alpha \stackrel{t(M)}{\longrightarrow}f$) if and
only if  $f_\alpha \chi_B $ converges to $f\chi_B$ in
$\mu$-measure  for each $B\in\Sigma$ with $\mu(B)<\infty$. Thus
$\{f_\alpha\}$ remains convergent to $f$ if $\tau$ is replaced by
 another faithful semi-finite normal trace on $M.$  If  $M$ is
$\sigma$-finite, i.e. any family of nonzero mutually orthogonal
projectors from $P(M)$ is at most countable, then there exists a
faithful finite normal  trace $\tau$ on $M.$ In this case, the
topology $t(M)$ is metrizable, and convergence of a sequence $f_n
\stackrel{t(M)}{ \longrightarrow} f$ is equivalent to convergence
of $f_n$ to $f$ in  trace $\tau.$

Let now $M$ be an arbitrary finite von Neumann algebra, $\Phi_M:
M\to Z(M)$ be a center-valued trace on $M$ (\cite{SZ.}, 7.11). Let
$Z(M)\cong L^\infty(\Omega,\Sigma,\mu).$ The locally measure
topology  $t(M)$ on $S(M)$ is by definition the linear (Hausdorff)
topology whose fundamental system of neighborhoods around $0$ is
given by
$$
V(B,\varepsilon, \delta ) = \{x\in S(M)\colon \ \mbox{there exists
} \ p\in P(M), z\in P(Z(M)) $$  $$ \mbox{ such that} \ xp\in M,
\|xp\|_{M}\leqslant\varepsilon, \ z^\bot \in
W(B,\varepsilon,\delta),  \ \Phi_M(zp^\bot)\leqslant\varepsilon
z\},$$ where $\|\cdot\|_{M}$ is the $C^*$-norm in $M.$  It is
known that, $(S(M),t(M))$ is a complete topological $*$-algebra
\cite{Yead1}.

The net $\{x_\alpha\}\subset S(M)$ converges to $x\in S(M)$ in
trace $\Phi_M$ (notation: $x_\alpha
\stackrel{\Phi_M}{\longrightarrow}x$ ) if
$\Phi_M(E_\lambda^\bot(|x_\alpha-x|))\stackrel{t(Z(M))}{\longrightarrow}0$
for all $\lambda>0.$

\begin{pr}\label{2.1.} (see \cite{Mur_m}, \S 3.5) Let $M$ be a finite von Neumann algebra, $x_\alpha,~x\in S(M).$ The following conditions
are equivalent:

$(i)$ $x_\alpha \stackrel{t(M)}{\longrightarrow}x;$

$(ii)$ $x_\alpha \stackrel{\Phi_M}{\longrightarrow}x;$

$(iii)$
$E_\lambda^\bot(|x_\alpha-x|)\stackrel{t(M)}{\longrightarrow}0$
for all $\lambda>0.$
\end{pr}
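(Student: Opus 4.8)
The plan is to first reduce to the case $x=0$: since $t(M)$ is a linear topology and each of the three conditions is phrased through the single net $|x_\alpha-x|$, I set $y_\alpha=x_\alpha-x$ and prove the equivalences for $y_\alpha$ converging to $0$. The heart of the argument is a comparison lemma for projections, which I would isolate first: \emph{for a net of projections $q_\alpha\in P(M)$, one has $q_\alpha\stackrel{t(M)}{\longrightarrow}0$ if and only if $\Phi_M(q_\alpha)\stackrel{t(Z(M))}{\longrightarrow}0$.}

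To prove this lemma I would unwind the definition of the basic neighborhood $V(B,\varepsilon,\delta)$ when the argument is a projection $q$ and $\varepsilon<1$. For the implication $\Phi_M(q_\alpha)\to0\Rightarrow q_\alpha\to0$, I choose the cutting projection $p=q_\alpha^\bot$, so that $q_\alpha p=0$ and $\|q_\alpha p\|_M=0$; it then suffices to produce a central projection $z$ with $z^\bot\in W(B,\varepsilon,\delta)$ and $\Phi_M(zq_\alpha)\le\varepsilon z$. Taking $z^\bot$ to be the spectral projection of $\Phi_M(q_\alpha)\in Z(M)\cong L^\infty(\Omega,\Sigma,\mu)$ associated with the set $\{\Phi_M(q_\alpha)>\varepsilon\}\cap B$, the inequality $z\Phi_M(q_\alpha)\le\varepsilon z$ holds by construction, while the hypothesis that $\Phi_M(q_\alpha)\to0$ locally in measure forces $\mu(\{\Phi_M(q_\alpha)>\varepsilon\}\cap B)\le\delta$ eventually, giving $z^\bot\in W(B,\varepsilon,\delta)$. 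For the converse, membership $q_\alpha\in V(B,\varepsilon,\delta)$ with $\varepsilon<1$ yields $p$ and $z$ with $\|q_\alpha p\|_M\le\varepsilon<1$; this forces $q_\alpha\wedge p=0$, so by the Kaplansky parallelogram law $q_\alpha\sim(q_\alpha\vee p)-p\le p^\bot$. Since $\Phi_M$ is monotone and takes equal values on equivalent projections, multiplying by the central projection $z$ gives $z\Phi_M(q_\alpha)\le\Phi_M(zp^\bot)\le\varepsilon z$; together with $z^\bot\in W(B,\varepsilon,\delta)$ this places $\Phi_M(q_\alpha)$ in a $W$-neighborhood, so $\Phi_M(q_\alpha)\to0$ in $t(Z(M))$.

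With the lemma in hand, the equivalence $(ii)\Leftrightarrow(iii)$ is immediate: applying it to $q_\alpha=E_\lambda^\bot(|y_\alpha|)$ for each fixed $\lambda>0$ identifies the two conditions termwise. For $(i)\Leftrightarrow(iii)$ I use the spectral projections as the cutting projections. To obtain $(iii)\Rightarrow(i)$, given $V(B,\varepsilon,\delta)$ I set $p=E_\varepsilon(|y_\alpha|)$, so that $y_\alpha p\in M$ and, from the polar decomposition $y_\alpha=u|y_\alpha|$ together with the estimate $\||y_\alpha|E_\varepsilon(|y_\alpha|)\|_M\le\varepsilon$, one has $\|y_\alpha p\|_M\le\varepsilon$; since $p^\bot=E_\varepsilon^\bot(|y_\alpha|)\to0$ in $t(M)$, the lemma supplies a central $z$ with $z^\bot\in W(B,\varepsilon,\delta)$ and $\Phi_M(zp^\bot)\le\varepsilon z$, whence $y_\alpha\in V(B,\varepsilon,\delta)$. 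For $(i)\Rightarrow(iii)$, membership $y_\alpha\in V(B,\varepsilon,\delta)$ with $\varepsilon<\lambda$ gives $p$ with $\||y_\alpha|p\|_M=\|y_\alpha p\|_M\le\varepsilon$; a vector $\xi$ in the range of $p\wedge E_\lambda^\bot(|y_\alpha|)$ would satisfy both $\||y_\alpha|\xi\|\le\varepsilon\|\xi\|$ and $\||y_\alpha|\xi\|\ge\lambda\|\xi\|$, forcing $p\wedge E_\lambda^\bot(|y_\alpha|)=0$, and Kaplansky again shows that $E_\lambda^\bot(|y_\alpha|)$ is equivalent to a subprojection of $p^\bot$, so $z\Phi_M(E_\lambda^\bot(|y_\alpha|))\le\Phi_M(zp^\bot)\le\varepsilon z$. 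This shows $\Phi_M(E_\lambda^\bot(|y_\alpha|))\to0$ in $t(Z(M))$, and the lemma converts this back to $(iii)$.

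The main obstacle I anticipate is the comparison lemma itself, specifically the bookkeeping that transfers the two-parameter $V$-neighborhood data (the operator cutoff $\varepsilon$ and the central measure defect $\delta$) into $W$-neighborhood data on $Z(M)$ and back. The clean passage of the trace inequality through equivalence of projections rests on the Kaplansky parallelogram law combined with the monotonicity and the trace identity $\Phi_M(v^*v)=\Phi_M(vv^*)$ for the center-valued trace $\Phi_M$; matching the parameters $\varepsilon,\delta$ on the two sides (for instance exploiting that a projection of norm $<1$ in $L^\infty$ vanishes on the relevant set) requires care but introduces no new ideas.
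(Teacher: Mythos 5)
The paper itself gives no proof of Proposition \ref{2.1.}: it is quoted verbatim from \cite{Mur_m}, \S 3.5, so there is no in-paper argument to compare against; your proposal has to be judged on its own. Your route --- reduce to $x=0$, prove the equivalence first for nets of projections ($q_\alpha\stackrel{t(M)}{\longrightarrow}0$ iff $\Phi_M(q_\alpha)\stackrel{t(Z(M))}{\longrightarrow}0$), then transfer to general $y_\alpha$ through the spectral projections $E_\lambda^\bot(|y_\alpha|)$, using $p=E_\varepsilon(|y_\alpha|)$ as the cutting projection in one direction and Kaplansky's parallelogram law $q\sim q\vee p-p\le p^\bot$ together with $\Phi_M(v^*v)=\Phi_M(vv^*)$ and monotonicity in the other --- is the standard argument for this statement, and its skeleton is sound: the converse half of your lemma and both implications between $(i)$ and $(iii)$ are correct as written, including the norm identity $\|\,|y_\alpha|p\,\|_M=\|y_\alpha p\|_M$ and the disjointness argument $p\wedge E_\lambda^\bot(|y_\alpha|)=0$ for $\varepsilon<\lambda$.

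There is, however, one concrete error, though it is repairable with your own tools. In the direction $\Phi_M(q_\alpha)\to0\Rightarrow q_\alpha\to0$ you take $z^\bot$ to be the characteristic function of $\{\Phi_M(q_\alpha)>\varepsilon\}\cap B$ and claim that $z\Phi_M(q_\alpha)\le\varepsilon z$ ``holds by construction.'' It does not: the condition $\Phi_M(zp^\bot)\le\varepsilon z$ in the definition of $V(B,\varepsilon,\delta)$ is a global inequality in $Z(M)\cong L^\infty(\Omega,\Sigma,\mu)$, and your $z$ equals $\mathbf{1}$ on all of $\Omega\setminus B$, where the hypothesis of local measure convergence gives no control over $\Phi_M(q_\alpha)$; the inequality fails wherever $\Phi_M(q_\alpha)>\varepsilon$ off $B$. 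The fix is to drop the intersection with $B$: take $z^\bot=\chi_{\{\Phi_M(q_\alpha)>\varepsilon\}}$. Then $z\Phi_M(q_\alpha)\le\varepsilon z$ genuinely holds by construction, and $z^\bot\in W(B,\varepsilon,\delta)$ still follows from exactly the measure estimate you invoke, because membership of a projection in $W(B,\varepsilon,\delta)$ (for $\varepsilon<1$) only constrains its support inside $B$: taking $E=B\setminus\{\Phi_M(q_\alpha)>\varepsilon\}$ gives $z^\bot\chi_E=0$ and $\mu(B\setminus E)=\mu\bigl(B\cap\{\Phi_M(q_\alpha)>\varepsilon\}\bigr)\le\delta$ eventually. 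The same correction must be carried into your step $(iii)\Rightarrow(i)$, where you invoke this construction to produce the central witness $z$ for the neighborhood $V(B,\varepsilon,\delta)$. With that one-line repair the proof is complete.
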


Let $\tau$ be a faithful semi-finite normal trace on $M.$ An
operator $x\in S(M)$ is said to be {\em $\tau$-measurable} if
$\tau(E_\lambda ^\bot(|x|)) < \infty$ for some $\lambda>0.$ The
set $S(M,\tau)$ of all $\tau$-measurable operators is the
$*$-subalgebra in $S(M),$ in addition  $M \subset S(M,\tau).$ If
$\tau(\mathbf{1}) < \infty,$ then $S(M,\tau)=S(M).$

Denote by $t_\tau$ the locally measure topology in $S(M,\tau)$
generated by a trace $\tau$ (see, for example, \cite{Nels.}). If
$x_\alpha, x \in S(M,\tau)$ and $x_\alpha$ converges to $x$ in
topology $t_\tau$ (notation: $x_\alpha
\stackrel{\tau}{\longrightarrow}x$ ), then $x_\alpha
\stackrel{t(M)}{\longrightarrow}x$ (\cite{Mur_m}, \S 3.5). If
$\tau$ is finite, then  topologies $t(M)$ and $t_\tau$ coincide
(\cite{Mur_m}, \S 3.5). It is known that   $x_\alpha
\stackrel{\tau}{\longrightarrow}x$ if and only if
$\tau(E_\lambda^\bot(|x_\alpha-x|)) \to 0 $ for all $\lambda>0$
\cite{Fac}.

Denote by $T(M)$ the set of all nonzero finite normal traces on
the finite von Neumann algebra $M.$

\begin{pr}\label{2.5.}
Let $M$ be a finite von Neumann algebra, $x_\alpha,x\in S(M).$
Then

$(i)$ if $x_\alpha\stackrel{t(M)}{\longrightarrow}x,$ then
$|x_\alpha|\stackrel{t(M)}{\longrightarrow}|x|$ and
$\tau(E_\lambda^\bot(|x_\alpha-x|))\to 0$ for all $\lambda>0$ and
$\tau \in T(M);$

$(ii)$ if $T_1(M)$ is a separating subset of $T(M)$ and
$\tau(E_\lambda^\bot(|x_\alpha-x|))\to 0$ for all $\lambda>0,$
$\tau \in T_1(M),$ then $x_\alpha\stackrel{t(M)}{\longrightarrow}
x.$
\end{pr}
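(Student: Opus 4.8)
The plan is to establish part (ii) first, because the modulus statement in part (i) will be reduced to it, and then to prove the two assertions of (i). Two standard facts will be used throughout. First, every $\tau\in T(M)$ factors through the center-valued trace, $\tau(x)=\tau(\Phi_M(x))$ for $x\in M_+$, so that $\tau|_{Z(M)}$ is integration against a finite measure $\nu_\tau\ll\mu$ with density $g_\tau\in L^1(\mu)_+$, where $Z(M)\cong L^\infty(\Omega,\Sigma,\mu)$. Second, by Proposition \ref{2.1.} (together with $\Phi_M(zq)=z\Phi_M(q)$ and the explicit form of the neighborhoods $V(B,\varepsilon,\delta)$), for a net of projections $q_\alpha$ one has $q_\alpha\stackrel{t(M)}{\to}0$ iff $\Phi_M(q_\alpha)\to0$ locally in measure in $Z(M)$.

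For (ii) I would fix $\lambda>0$, set $q_\alpha=E_\lambda^\bot(|x_\alpha-x|)$ and $f_\alpha=\Phi_M(q_\alpha)$ (so $0\le f_\alpha\le\mathbf 1$), and aim to show $f_\alpha\to0$ locally in measure, which by the above is exactly $x_\alpha\stackrel{t(M)}{\to}x$. The hypothesis reads $\tau(q_\alpha)=\int_\Omega f_\alpha g_\tau\,d\mu\to0$ for all $\tau\in T_1(M)$. Fixing $B\in\Sigma$ with $\mu(B)<\infty$, I would use separation of $T_1(M)$ to see that $\mathrm{ess\,sup}_\tau g_\tau>0$ a.e. on $B$ (otherwise some $\chi_A$, $\mu(A)>0$, would be annihilated by all $\tau\in T_1(M)$), and, $\mu|_B$ being finite, realize this essential supremum along a countable subfamily $\{g_{\tau_n}\}$. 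Choosing $c_n>0$ with $\sum_n c_n(1+\nu_{\tau_n}(\Omega))<\infty$ I would form $\nu_*=\sum_n c_n\nu_{\tau_n}$, whose density $g_*=\sum_n c_n g_{\tau_n}$ is strictly positive a.e. on $B$, so $\nu_*|_B\sim\mu|_B$, and for which $\int_B f_\alpha g_*\,d\mu\to0$ by dominated convergence for the series. Then for each $\varepsilon>0$, $\varepsilon\,\nu_*(\{f_\alpha>\varepsilon\}\cap B)\le\int_B f_\alpha g_*\,d\mu\to0$, and finally $\mu(\{f_\alpha>\varepsilon\}\cap B)=\int_{\{f_\alpha>\varepsilon\}\cap B}g_*^{-1}\,d\nu_*\to0$ since $g_*^{-1}\in L^1(\nu_*|_B)$ (indeed $\int_B g_*^{-1}\,d\nu_*=\mu(B)<\infty$). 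As $B$ and $\varepsilon$ are arbitrary this proves (ii).

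For (i), let $x_\alpha\stackrel{t(M)}{\to}x$. Proposition \ref{2.1.} gives $E_\lambda^\bot(|x_\alpha-x|)\stackrel{t(M)}{\to}0$, hence $f_\alpha:=\Phi_M(E_\lambda^\bot(|x_\alpha-x|))\to0$ locally in measure with $0\le f_\alpha\le\mathbf 1$. To get the trace statement I would fix $\tau\in T(M)$ and $\eta>0$, choose $B=\{g_\tau\ge 1/k\}$ with $\mu(B)<\infty$ and $\int_{\Omega\setminus B}g_\tau\,d\mu<\eta$, and combine $f_\alpha\chi_B\to0$ in $\mu$-measure with $0\le f_\alpha\le1$ to obtain $\limsup_\alpha\tau(E_\lambda^\bot(|x_\alpha-x|))=\limsup_\alpha\int f_\alpha g_\tau\,d\mu\le\eta$; letting $\eta\to0$ yields $\tau(E_\lambda^\bot(|x_\alpha-x|))\to0$ for all $\lambda>0$ and $\tau\in T(M)$. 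For the modulus statement I would observe that each $\tau\in T(M)$ is finite, so $S(M,\tau)=S(M)$ and the convergence just proved says $x_\alpha\to x$ in the single-trace measure topology $t_\tau$; the classical continuity of $x\mapsto|x|$ in $t_\tau$ for $\tau$-measurable operators (\cite{Fac}, \cite{Mur_m}) then gives $\tau(E_\lambda^\bot(||x_\alpha|-|x||))\to0$ for all $\lambda>0$ and $\tau\in T(M)$. Since $T(M)$ separates the finite algebra $M$, part (ii) applied to $\{|x_\alpha|\}$ yields $|x_\alpha|\stackrel{t(M)}{\to}|x|$.

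I expect the measure-theoretic core of (ii) to be the main obstacle: transferring scalar convergence $\tau(q_\alpha)\to0$ along a merely separating family into local-measure convergence of $\Phi_M(q_\alpha)$. The two delicate points are the countable realization of the essential supremum of the densities, which makes $\nu_*$ equivalent to $\mu$ on each finite piece $B$, and the final conversion of $\nu_*$-smallness of $\{f_\alpha>\varepsilon\}$ into $\mu$-smallness, which succeeds precisely because $g_*^{-1}$ is $\nu_*$-integrable over $B$. The modulus claim in (i), by contrast, I expect to be routine once the trace statement is in hand, being reduced through finiteness of each $\tau$ to the known single-trace continuity of the absolute value.
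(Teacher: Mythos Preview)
Your argument is correct and complete in all essentials, but it follows a genuinely different route from the paper's.

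The paper proves (i) first and (ii) second; you invert this order and exploit (ii) to get the modulus statement in (i) cleanly. For (ii), the paper does not pass through densities at all: it picks for each $\tau\in T_1(M)$ the central support $s(\tau)\in P(Z(M))$, extracts a maximal family of nonzero mutually orthogonal central projections $z_i\le s(\tau_i)$ with $\sup_i z_i=\mathbf 1$, defines a single faithful semifinite numerical trace $\tau(x)=\sum_i\tau_i(xz_i)$, and then shows $E_\lambda^\bot(|x_\alpha-x|)z_i\stackrel{t(M)}{\to}0$ for each $i$; the passage from ``piecewise'' $t(M)$-convergence to global $t(M)$-convergence is handled by an explicit neighborhood argument using $V(B,\varepsilon,\delta)$ and $u_\gamma=\sum_{i\in\gamma}z_i\uparrow\mathbf 1$. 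Your version replaces all of this by the Radon--Nikodym identification $\tau\leftrightarrow g_\tau\in L^1(\mu)_+$, the countable realization of $\operatorname*{ess\,sup}_\tau g_\tau$ on each finite $B$, the auxiliary measure $\nu_*=\sum c_n\nu_{\tau_n}$, Markov's inequality, and absolute continuity of the integral. Both routes are valid; yours is more classically measure-theoretic, while the paper's stays inside the von Neumann algebra/topological framework and avoids densities entirely. For (i), the paper also avoids densities: given $\tau\in T(M)$ it simply restricts to $Ms(\tau)$, where the restriction $\tau_0$ of $\tau$ is faithful finite and hence $t(M)=t_{\tau_0}$ on that corner, giving both $\tau(E_\lambda^\bot(|x_\alpha-x|))\to0$ and (via continuity of $y\mapsto\sqrt y$, citing \cite{Tih}) the modulus statement directly; the contradiction wrapper in the paper plays the same role as your appeal to (ii).

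One small point to tighten: when you invoke ``$S(M,\tau)=S(M)$'' and the $t_\tau$-continuity of $x\mapsto|x|$, note that $\tau\in T(M)$ need not be faithful, and the paper (and the standard references) define $S(M,\tau)$ and $t_\tau$ only for faithful semifinite $\tau$. The fix is exactly what the paper does: pass to $Ms(\tau)$, where $\tau$ restricts to a faithful finite trace $\tau_0$, apply the single-trace continuity there, and observe $\tau(E_\lambda^\bot(|\,|x_\alpha|-|x|\,|))=\tau_0(E_\lambda^\bot(|\,|x_\alpha|s(\tau)-|x|s(\tau)\,|))$. With that adjustment your modulus argument goes through unchanged.
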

\begin{proof} $(i)$ Let $\tau\in T(M)$ and $s(\tau)$ be the support
of a trace $\tau.$ Then $s(\tau)\in P(Z(M))$ and $\tau(x)=\tau(
xs(\tau))$ for all $x\in M$ (\cite{SZ.}, 5.15, 7.13). Since
$x_\alpha\stackrel{t(M)}{\longrightarrow}x,$ $x_\alpha
s(\tau)\stackrel{t(M)}{\longrightarrow}xs(\tau).$ The restriction
of $\tau$ on $Ms(\tau)$ is a faithful finite normal trace.
Therefore $\tau(E_\lambda^\bot(|x_\alpha-x|))=\tau( E_\lambda
^\bot(|x_\alpha s(\tau)-xs(\tau)|)) \to 0$ for all $\lambda>0.$

If $|x_\alpha|\stackrel{t(M)}{\nrightarrow}|x|,$  then there are
$\lambda_0>0,$ $\tau \in T(M)$ such that $\tau(E_{\lambda_0}^\bot(
|\,|x_\alpha|-|x|\,|))\nrightarrow 0.$ The restriction $\tau_0$ of
the trace $\tau$ on $Ms(\tau)$ is a faithful finite normal trace.
Therefore convergence $x_\alpha
s(\tau)\stackrel{t(M)}{\longrightarrow}xs (\tau)$ implies
$x_\alpha s(\tau)\stackrel{\tau_0}{ \longrightarrow}xs(\tau).$
Using  continuity of the operator function $\sqrt{y},~y\in
S_+(Ms(\tau))$ \cite{Tih}, we obtain
$$
|x_\alpha|s(\tau)=\sqrt{(x_\alpha s(\tau))^*(x_\alpha s(\tau))}
\stackrel{\tau_0}{\longrightarrow}\sqrt{(x^*s(\tau))(xs(\tau))}=|x|s(\tau).
$$
Hence
$\tau(E_{\lambda_0}^\bot(|\,|x_\alpha|-|x|\,|))=\tau(E_{\lambda_0}^\bot(|\,
|x_\alpha|s(\tau)-|x|s(\tau)\,|))\to 0,$ which is not the case.

$(ii)$ Since $T_1(M)$ is the separating family  traces on $M,$
$\sup\limits_{\tau\in T_1(M)}s(\tau)=\mathbf{1}.$ Hence there is a
family $\{z_i\}_{i\in I}$ of nonzero mutually orthogonal central
projectors  such that $\sup\limits_{i\in I}z_i =\mathbf{1},$ and
for any $i\in I,$ there exists  $\tau_i \in T_1(M)$ with  $z_i
\leq s(\tau_i)$ (\cite{Vl}, chapter III, \S 2). We defined  the
faithful semi-finite normal trace on $M$ as
$\tau(x)=\sum\limits_{i \in I}\tau_i(xz_i),~x\in M.$ It is clear
that restrictions $\tau$ and $\tau_i$ coincide on $Mz_i.$ In
addition, $\tau_i(E_\lambda^\bot(|x_\alpha
z_i-xz_i|))=\tau_i(E_\lambda^\bot(|x_\alpha-x|))\to 0$ for all
$\lambda>0,~i\in I.$ Hence, $E_\lambda^\bot(|x_\alpha-x|)z_i
\stackrel{\tau}{\longrightarrow}0,$ and therefore $E_\lambda^\bot(
|x_\alpha-x|)z_i\stackrel{t(M)}{\longrightarrow}0.$

For any finite subset $\gamma \subset I,$ let $u_\gamma
=\sum\limits_{i\in \gamma} z_i.$ It is clear that $u_\gamma
\uparrow\mathbf{1}$ and  $\Phi_M(u_\gamma) \uparrow
\Phi_M(\mathbf{1}).$ Hence, $\Phi_M(u_\gamma^\bot)
\stackrel{t(Z(M))}{\longrightarrow}0,$ i.e. $u_\gamma^\bot
\stackrel{t(M)}{\longrightarrow}0.$

Let $U$ be an arbitrary neighborhood of $0$ in $(S(M),t(M)).$ We
choose $V(B,\varepsilon,\delta)$ such that $V(B,\varepsilon,
\delta)+ V(B,\varepsilon,\delta)\subset U.$ Fix $\gamma_0$ with
$(\mathbf{1}-u_{\gamma_0})\in V(B,\frac{\varepsilon}{4},\delta).$
Since $E_\lambda^\bot
(|x_\alpha-x|)u_{\gamma_0}\stackrel{t(M)}{\longrightarrow}0,$
there is an $\alpha_0$ such that $E_\lambda^\bot (|x_\alpha-x|)
u_{\gamma_0}\in V(B,\varepsilon,\delta)$ as $\alpha\geq \alpha_0.$
We have $a V(B,\frac{\varepsilon}{4},\delta) b \subset
V(B,\varepsilon, \delta),$ where $a,b \in M,~ \|a\|_{M}\leqslant
1$, $\|b\|_{M}\leqslant 1$ (see, for example, \cite{Mur_m}, \S
3.5). Hence
$$
E_\lambda^\bot(|x_\alpha-x|)=E_\lambda^\bot(|x_\alpha-x|)
u_{\gamma_0}+E_\lambda^\bot(|x_\alpha-x|)(\mathbf{1}-u_{\gamma_0})
\in
$$ $$
V(B,\varepsilon,\delta)+V(B,\varepsilon,\delta)\subset U
$$
for all $\alpha\geq \alpha_0.$ Therefore  $E_\lambda^\bot
(|x_\alpha-x|)\stackrel{t(M)}{\longrightarrow}0$ for all
$\lambda>0.$ Proposition \ref{2.1.} implies that $x_\alpha
\stackrel{t(M)}{\longrightarrow}x.$
\end{proof}

\section{ Vector lattice-valued traces}

Throughout the section,  let $M$ be an  von Neumann algebra, let
$F$ be an order complete vector lattice, and let
$F_\mathbb{C}=F\oplus iF$ be a complexification of $F.$ If
$z=\alpha + i\beta \in F_\mathbb{C},~\alpha, \beta \in F, $ then
$\overline{z}:=\alpha -i\beta,$ and $|z|:=\sup\{Re( e^{i\theta}z):
0\leq \theta < 2\pi \}$ (see\cite{Ku1}, 1.3.13).

An {\em $F_\mathbb{C}$-valued trace} on the von Neumann algebra
$M$ is a linear mapping $\Phi:M\to F_\mathbb{C}$ given
$\Phi(x^*x)=\Phi(xx^*)\geq 0$ for all $x\in M.$ It is clear that
$\Phi(M_h)\subset F, ~\Phi(M_+)\subset F_+=\{a\in F: a\geq 0\}.$ A
trace $\Phi$ is said to be {\em faithful} if the equality
$\Phi(x^*x)=0$ implies $x=0,$ {\em normal} if
$\Phi(x_\alpha)\uparrow\Phi(x)$ for every $x_\alpha,x\in
M_h,~x_\alpha \uparrow x.$

If $M$ is a finite von Neumann algebra, then  its center-valued
trace $\Phi_M:M\to Z(M)$ is an example of a $Z(M)$-valued faithful
normal trace.

Let $\Delta$ be a separating family of finite normal numerical
traces on the von Neumann algebra $M,$ $\mathbb{C}^\Delta=
\prod\limits_{\tau\in\Delta}\mathbb{C}_\tau,$ where
$\mathbb{C}_\tau=\mathbb{C}$ for all $\tau\in \Delta.$ Then
$\Phi(x)=\{\tau(x)\}_{\tau\in \Delta}$ is also an example of an
faithful normal $\mathbb{C}^\Delta$-valued trace on $M.$

Let us list some properties of the trace $\Phi:M\to F_\mathbb{C}.$

\begin{pr}\label{3.2.} $(i)$ Let $x,y,a,b \in M.$ Then

$\Phi(x^*)=\overline{\Phi(x)},~ \Phi(xy)=\Phi(yx),$
$\Phi(|x^*|)=\Phi(|x|),$

$|\Phi(axb)| \leq \|a\|_M \|b\|_M \Phi(|x|);$

$(ii)$ If $ \Phi$ is a faithful trace, then $M$ is  finite;

$(iii)$ If $x_n, x \in M$ and $\|x_n-x\|_M \to 0,$ then
$|\Phi(x_n)-\Phi(x)|$ relative uniform converges  to zero;

$(iv)$ If $M$ is a finite von Neumann algebra, then $
\Phi(\Phi_M(x))=\Phi(x)$ for all $x\in M;$

$(v)$ $\Phi(|x+y|)\leq \Phi(|x|)+\Phi(|y|)$ for all $x, y \in M.$
\end{pr}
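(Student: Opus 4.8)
The plan is to establish the six assertions in the order (i)$_a$: $\Phi(x^*)=\overline{\Phi(x)}$; (i)$_b$: $\Phi(xy)=\Phi(yx)$; (i)$_c$: $\Phi(|x^*|)=\Phi(|x|)$; (i)$_d$: the norm estimate; then (ii)--(v), each of which leans on earlier parts. For (i)$_a$ I would decompose $x=h+ik$ with $h=\tfrac12(x+x^*),\,k=\tfrac1{2i}(x-x^*)\in M_h$; since $\Phi(M_h)\subset F$ is already noted, $\Phi(h),\Phi(k)\in F$ are real and $\Phi(x^*)=\Phi(h)-i\Phi(k)=\overline{\Phi(x)}$. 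For the trace identity (i)$_b$ I would use complex polarization of the hypothesis $\Phi(z^*z)=\Phi(zz^*)$: expanding this for $z+w$ and for $z+iw$ and subtracting the diagonal terms yields the two relations $\Phi(z^*w)+\Phi(w^*z)=\Phi(zw^*)+\Phi(wz^*)$ and $\Phi(z^*w)-\Phi(w^*z)=\Phi(wz^*)-\Phi(zw^*)$, whose sum gives $\Phi(z^*w)=\Phi(wz^*)$; taking $z=x^*,\,w=y$ produces $\Phi(xy)=\Phi(yx)$. For (i)$_c$ I would use the polar decomposition $x=u|x|$, note $|x^*|=u|x|u^*$ and $u^*u=s(|x|)$, and compute $\Phi(|x^*|)=\Phi(u|x|u^*)=\Phi(|x|u^*u)=\Phi(|x|)$ by (i)$_b$.

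The core of the whole proposition is the estimate (i)$_d$, and its crux is the scalar-type inequality $|\Phi(y)|\le\Phi(|y|)$. To prove this I would use the representation $|\zeta|=\sup_{0\le\theta<2\pi}\mathrm{Re}(e^{i\theta}\zeta)$ in $F_{\mathbb C}$, so that it suffices to bound each $\mathrm{Re}(e^{i\theta}\Phi(y))$ by $\Phi(|y|)$. Writing $y=u|y|=cg^2$ with $g=|y|^{1/2}\ge0$ and $c=e^{i\theta}u$ a contraction, the trace property gives $e^{i\theta}\Phi(y)=\Phi(cg^2)=\Phi(gcg)$, whence, using (i)$_a$,
\[
\mathrm{Re}\bigl(e^{i\theta}\Phi(y)\bigr)=\tfrac12\bigl(\Phi(gcg)+\Phi(gc^*g)\bigr)=\tfrac12\,\Phi\bigl(g(c+c^*)g\bigr).
\]
Since $\|c\|\le1$ forces $c+c^*\le2\cdot\mathbf{1}$, we get $g(c+c^*)g\le2g^2$, and positivity of $\Phi$ yields $\mathrm{Re}(e^{i\theta}\Phi(y))\le\Phi(g^2)=\Phi(|y|)$; taking the supremum over $\theta$ gives $|\Phi(y)|\le\Phi(|y|)$.

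With this in hand I would record two contraction bounds for $a\in M$, $\|a\|_M\le1$. From $|ay|^2=y^*a^*ay\le y^*y=|y|^2$ and operator monotonicity of the square root, $|ay|\le|y|$, so $\Phi(|ay|)\le\Phi(|y|)$; and for right multiplication I reduce to the left case by (i)$_c$: $\Phi(|ya|)=\Phi(|a^*y^*|)\le\Phi(|y^*|)=\Phi(|y|)$. Normalizing $a,b$ to norm one and composing the two bounds gives $\Phi(|axb|)\le\|a\|_M\|b\|_M\Phi(|x|)$, and combining with $|\Phi(axb)|\le\Phi(|axb|)$ finishes (i)$_d$. This scalar-to-lattice passage is the step I expect to be the main obstacle, because the modulus in $F_{\mathbb C}$ is defined by a supremum and there is no multiplication on $F$ to run a naive Cauchy--Schwarz; the symmetrization trick above is what circumvents it.

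The remaining parts are then short. For (ii), if $M$ were not finite there is an isometry $v$ with $v^*v=\mathbf1$, $vv^*\neq\mathbf1$; then $\Phi(\mathbf1-vv^*)=\Phi(v^*v)-\Phi(vv^*)=0$ while $\mathbf1-vv^*$ is a nonzero projection, contradicting faithfulness, so $M$ is finite. For (iii), $|\Phi(x_n)-\Phi(x)|\le\Phi(|x_n-x|)\le\|x_n-x\|_M\,\Phi(\mathbf1)$ by (i), which is relative uniform convergence to zero with regulator $\Phi(\mathbf1)$. For (iv) I would invoke Dixmier's approximation theorem: $\Phi_M(x)$ is a $\|\cdot\|_M$-limit of convex combinations $\sum_j\lambda_j u_jxu_j^*$ of unitary conjugates; applying $\Phi$ and using (i)$_b$ with $u_j^*u_j=\mathbf1$ and $\sum_j\lambda_j=1$ gives $\Phi\bigl(\sum_j\lambda_j u_jxu_j^*\bigr)=\Phi(x)$ for every such combination, and passing to the limit through the norm-continuity of (iii) yields $\Phi(\Phi_M(x))=\Phi(x)$. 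Finally for (v) I use the polar decomposition $x+y=u|x+y|$, so $|x+y|=u^*(x+y)$ and, since $\Phi(|x+y|)\in F$ is real,
\[
\Phi(|x+y|)=\mathrm{Re}\,\Phi(u^*x)+\mathrm{Re}\,\Phi(u^*y)\le|\Phi(u^*x)|+|\Phi(u^*y)|\le\Phi(|x|)+\Phi(|y|),
\]
the last inequality being (i)$_d$ with $a=u^*$, $b=\mathbf1$.
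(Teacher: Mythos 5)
Your proof is correct, and in two places it runs along a genuinely different track than the paper's. For parts (i) and (ii) the paper simply says the proof is ``the same as for numerical traces'' and cites Takesaki; you instead spell everything out, and the most valuable piece is exactly the one the citation glosses over: the inequality $|\Phi(y)|\le\Phi(|y|)$ in a lattice-valued setting, where $|\cdot|$ on $F_{\mathbb C}$ is defined by the supremum $\sup_\theta \mathrm{Re}(e^{i\theta}z)$ and no Cauchy--Schwarz is available. Your symmetrization $\mathrm{Re}(e^{i\theta}\Phi(y))=\tfrac12\Phi(g(c+c^*)g)\le\Phi(g^2)$, followed by taking the supremum over $\theta$, is precisely the verification that the scalar argument survives with only linearity, positivity, and order completeness --- this is what justifies the paper's ``same as numerical traces'' claim. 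Parts (iii) and (iv) coincide with the paper's proof (the bound $|\Phi(x_n)-\Phi(x)|\le\|x_n-x\|_M\Phi(\mathbf 1)$ and the Dixmier approximation argument, respectively). For (v) you diverge again: the paper invokes the Akemann--Anderson--Pedersen triangle inequality $|x+y|\le u|x|u^*+v|y|v^*$ and then applies traciality, whereas you use the polar decomposition $x+y=u|x+y|$, write $\Phi(|x+y|)=\mathrm{Re}\,\Phi(u^*x)+\mathrm{Re}\,\Phi(u^*y)$, and finish with your estimate $|\Phi(u^*x)|\le\Phi(|x|)$. Your route buys self-containedness --- it eliminates the nontrivial operator-theoretic input from the Akemann--Anderson--Pedersen paper at the price of leaning on your part (i)$_d$ --- while the paper's version is a one-line deduction once that reference is granted. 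Both are sound; note only that your (v) silently uses $\mathrm{Re}(z)\le|z|$ in $F_{\mathbb C}$ (true, by taking $\theta=0$ in the defining supremum), which is worth stating explicitly.
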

\begin{proof} The proof of  $(i)$ and $(ii)$  is the same as for
numerical traces (see, for example, \cite{Take1.}, chapter V, \S
2).

The proof of $(iii)$ follows from the inequality
$|\Phi(x_n)-\Phi(x)| \leq \|x_n-x\|_M \Phi(\mathbf{1}).$

$(iv)$ Let $U(M)$ be the set of all unitary operators in $M.$ Then
$\Phi_M(x)$ belongs to the closure  of the convex hull $co\{uxu^*:
u\in U(M)\}$ (\cite{SZ.}, 7.11). Since
$\Phi(uxu^*)=\Phi(u^*ux)=\Phi(x),$ we get $\Phi(y)=\Phi(x)$ for
any $y\in co\{uxu^*:u\in U(M)\}.$ Therefore, because of  $(iii),$
we have $\Phi(x) =\Phi(\Phi_M(x)).$

$(v)$ Since $|x+y|\leq u|x|u^* + v|y|v^*$ for some partial
isometries $u,v$ in $M$ (see \cite{Ake}), we have, by virtue of
 $(i)$
$$
\Phi(|x+y|)\leq \Phi(u|x|u^*)+\Phi(v|y|v^*)
=\Phi(u^*u|x|)+\Phi(v^*v|y|)$$ $$\leq \Phi(|x|)+\Phi(|y|).
$$
\end{proof}

The trace $\Phi:M\to F_\mathbb{C}$ possesses  {\em the Maharam
property} if for any $x\in M_+,~0\leq f\leq \Phi(x), ~f\in F,$
there exists a positive  $y\leq x$ such that $\Phi(y)=f.$ A
faithful normal $F_\mathbb{C}$-valued trace $\Phi$ with the
Maharam property is called {\em a Maharam trace} (compare with
\cite{Ku1}, III, 3.4.1). Obviously, any faithful finite numerical
trace on $M$ is a $\mathbb{C}$-valued Maharam trace.

Let us give another examples of Maharam traces. Let $M$ be a
finite von Neumann algebra, let $\mathcal{A}$ be a von Neumann
subalgebra in $Z(M),$ and let $T:Z(M) \to \mathcal{A}$ be an
injective linear positive normal operator. If $f\in
S(\mathcal{A})$ is a reversible positive element, then
$\Phi(T,f)(x)= f T(\Phi_M(x))$ is an $S(\mathcal{A})$-valued
faithful normal trace on $M.$ In addition, if $T(ab)=aT(b)$ for
all $a\in \mathcal{A}, b\in Z(M),$ then $\Phi(T,f)$ is a Maharam
trace on $M.$

Note that if $\tau$ is a faithful normal finite numerical trace on
$M$ and $ \dim(Z(M))>1,$ then $\Phi(x)=\tau(x) \mathbf{1}$ is a
$Z(M)$-valued faithful normal trace. In addition, $\Phi$ does not
possess the Maharam property. In fact, if $p\in Z(M),~0\neq p \neq
\mathbf{1},$ then for all $y\in M_+,~y\leq \mathbf{1}$ the
relation $\Phi(y)=\tau(y)\mathbf{1}\neq \tau(y)p\leq
\Phi(\mathbf{1})$ is valid.

Let $F$ have an order unit $\mathbf{1}_F.$ Denote by $B(F)$ the
complete Boolean algebra of unitary elements with respect to
$\mathbf{1}_F,$ and by $s(a):=\sup\limits_{n\geq
1}\{\mathbf{1}_F\wedge n|a| \} \in B(F)$ the support of an element
$a\in F.$ Since $|\Phi(x)|\leq \|x\|_M \Phi(\mathbf{1})$
(Proposition \ref{3.2.}$(i)$), the inequality $s(|\Phi(x)|)\leq
s(\Phi(\mathbf{1}))$ holds for all $x\in M.$ Let therefore
$s(\Phi(\mathbf{1}))=\mathbf{1}_F.$

Let $Q$ be the Stone representation space of the Boolean algebra
$B(F).$ Let $C_\infty(Q)$ be the order complete vector lattice of
all continuous functions $a: Q\to [-\infty, +\infty]$ such that
$a^{-1}(\{\pm \infty\})$ is a  nowhere dense subset of $Q.$ We
identify $F$ with the order-dense ideal in $C_\infty(Q)$
containing  algebra $C(Q)$ of all continuous real functions on
$Q.$ In addition, $\mathbf{1}_F$ is identified with the function
equal to 1  identically on $Q$ (\cite{Ku1}, 1.4.4).

The next theorem gives the description  of Maharam traces on von
Neumann algebras.

\begin{thm} \label{3.3.} Let $\Phi$ be an $F_\mathbb{C}$-valued Maharam
trace on a von Neumann algebra $M.$ Then there exists a von
Neumann subalgebra $\mathcal{A}$ in $Z(M),$ a $*$-isomorphism
$\psi$ from $\mathcal{A}$ onto the $*$-algebra $C(Q)_\mathbb{C},$
an injective  positive linear  normal operator $\mathcal{E}$ from
$Z(M)$ onto $\mathcal{A}$ with
$\mathcal{E}(\mathbf{1})=\mathbf{1},~\mathcal{E}^2=\mathcal{E},$
such that

$1)$ $\Phi(x)=\Phi(\mathbf{1})\psi(\mathcal{E}(\Phi_M(x)))$ for
all $x \in M;$

$2)$ $\Phi(zy)=\Phi(z\mathcal{E}(y))$ for all $z,y \in Z(M);$

$3)$ $\Phi(zy)=\psi(z)\Phi(y)$ for all $z\in \mathcal{A},$ $y\in
M.$
\end{thm}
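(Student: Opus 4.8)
The plan is to push everything onto the center and then use the Maharam property as a lifting device. Since $\Phi$ is faithful, $M$ is finite by Proposition \ref{3.2.}$(ii)$, so the center-valued trace $\Phi_M$ exists and Proposition \ref{3.2.}$(iv)$ gives $\Phi=\Phi\circ\Phi_M$. Hence $\Phi$ is determined by its restriction $\Phi_0:=\Phi|_{Z(M)}$, and I would first check that $\Phi_0$ is a faithful normal Maharam operator on the commutative algebra $Z(M)$: given $0\le f\le\Phi_0(x)$ with $x\in Z(M)_+$, the Maharam property yields $0\le y\le x$ in $M$ with $\Phi(y)=f$, and replacing $y$ by $\Phi_M(y)\in Z(M)_+$ (using $\Phi\circ\Phi_M=\Phi$ and $\Phi_M(y)\le\Phi_M(x)=x$) produces a central witness. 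All three conclusions will then be proved for $z\in Z(M)$ and transported to $M$ through $\Phi=\Phi\circ\Phi_M$ and the $Z(M)$-linearity $\Phi_M(zy)=z\Phi_M(y)$ of the center-valued trace.

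The core is the construction of $\psi$ and $\mathcal A$. For the band projection $\pi_e$ on $F$ determined by $e\in B(F)$, I would use the Maharam property together with an exhaustion (Zorn plus normality of $\Phi$) to produce a central projection $\nu(e)\in P(Z(M))$ with $\Phi_0(\nu(e)z)=\pi_e\Phi_0(z)$ for all $z\in Z(M)_+$. Uniqueness is immediate from faithfulness: if $p,p'$ both obey the relation then $\Phi_0((p-p')w)=0$ for all $w\in Z(M)$, and $w=\mathrm{sgn}(p-p')$ gives $\Phi_0(|p-p'|)=0$, so $p=p'$; this uniqueness in turn forces $\nu(e\wedge e')=\nu(e)\nu(e')$, and normality of $\Phi$ makes $\nu$ a complete Boolean monomorphism. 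Extending $\nu$ by linearity to $B(F)$-simple functions and then, via the norm continuity in Proposition \ref{3.2.}$(iii)$, to all of $C(Q)$, I obtain an injective unital $*$-homomorphism $\psi^{-1}:C(Q)_\mathbb{C}\to Z(M)$ whose range is the von Neumann subalgebra $\mathcal A$; $\psi$ is its inverse, and the module identity $\Phi_0(\psi^{-1}(g)z)=g\,\Phi_0(z)$ holds for all $g\in C(Q)$, which is conclusion $3)$ on $Z(M)$.

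It remains to produce $\mathcal E$. Because $|\Phi_0(z)|\le\|z\|_M\Phi(\mathbf 1)$ and $\Phi(\mathbf 1)$ is a weak order unit with $s(\Phi(\mathbf 1))=\mathbf 1_F$, the quotient $\Phi_0(z)/\Phi(\mathbf 1)$ is a well-defined bounded element of $C_\infty(Q)$, hence lies in $C(Q)$; I set $\mathcal E(z):=\psi^{-1}(\Phi_0(z)/\Phi(\mathbf 1))\in\mathcal A$. Linearity and positivity are clear, $\mathcal E(\mathbf 1)=\psi^{-1}(\mathbf 1)=\mathbf 1$, and the module identity gives $\Phi_0(a)=\psi(a)\Phi(\mathbf 1)$ for $a\in\mathcal A$, whence $\mathcal E|_{\mathcal A}=\mathrm{id}$ and $\mathcal E^2=\mathcal E$ with range $\mathcal A$. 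Conclusion $1)$ is then $\Phi(\mathbf 1)\psi(\mathcal E(z))=\Phi_0(z)=\Phi(z)$ for $z\in Z(M)$, extended to $x\in M$ via $\Phi=\Phi\circ\Phi_M$. For $2)$ I would record that $\Phi\circ\mathcal E=\Phi$ on $Z(M)$ (from the definition of $\mathcal E$) and combine it with $3)$: for $z\in\mathcal A$, $\Phi(z\mathcal E(y))=\psi(z)\Phi(\mathcal E(y))=\psi(z)\Phi(y)=\Phi(zy)$, exhibiting $\mathcal E$ as the $\Phi$-preserving conditional expectation onto $\mathcal A$. Finally $3)$ on all of $M$ follows from its central case together with $\Phi_M(zy)=z\Phi_M(y)$.

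\textbf{Main obstacle.} The delicate point is the Boolean lifting in the second paragraph: turning the Maharam property, which only returns an order-interval preimage $y$, into a genuine central \emph{projection} $\nu(e)$, and proving that $\nu$ is \emph{complete}, so that $\psi$ is a normal isomorphism and $\mathcal A$ a von Neumann subalgebra rather than a merely norm-closed one. This is exactly where normality of $\Phi$ and the identification of $F$ as an order-dense ideal of $C_\infty(Q)$ are indispensable; the same order-density, together with order-continuity of division by the weak unit $\Phi(\mathbf 1)$ in $C_\infty(Q)$, is what I would use to secure normality of $\mathcal E$.
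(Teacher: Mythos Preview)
Your strategy---restrict $\Phi$ to $Z(M)$ via $\Phi\circ\Phi_M=\Phi$, verify that this restriction is a Maharam operator, build the Boolean lifting $\nu:B(F)\to P(Z(M))$, and then produce the conditional expectation---matches the paper's exactly. The paper, however, does not construct $\nu$ or $\mathcal E$ by hand: it invokes Kusraev's structure theorem for Maharam operators (\cite{Ku1}, 3.4.3) to obtain the Boolean isomorphism $\varphi$, and Kusraev's conditional-expectation theorem (\cite{Ku2}, 4.2.9) to obtain $\mathcal E$ with its averaging property. Your explicit formula $\mathcal E(z)=\psi^{-1}\bigl(\Phi(z)/\Phi(\mathbf 1)\bigr)$ does coincide with the paper's $\mathcal E$ (this is exactly equation~(3) there, restricted to $Z(M)$), so the object you write down is the right one.

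The genuine gap is in your verification of conclusion~$2)$. The theorem asserts $\Phi(zy)=\Phi(z\mathcal E(y))$ for \emph{all} $z,y\in Z(M)$---the full averaging property of the conditional expectation. Your computation only treats $z\in\mathcal A$: there $3)$ gives $\Phi(z\mathcal E(y))=\psi(z)\Phi(\mathcal E(y))=\psi(z)\Phi(y)=\Phi(zy)$, which is merely the $\mathcal A$-module identity, not the averaging property. For general $z\in Z(M)$ your definition would force $\Phi_1(zy)=\Phi_1(z)\Phi_1(y)$, i.e.\ that the normalized trace $\Phi_1=\Phi(\mathbf 1)^{-1}\Phi$ is a \emph{homomorphism} on all of $Z(M)$; this fails whenever $\mathcal A\subsetneq Z(M)$. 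The identity $T(xy)=T(x\mathcal E(y))$ for all $x,y\in Z_h(M)$ is precisely the nontrivial content of \cite{Ku2}, Theorem~4.2.9, and it does not drop out of your formula for $\mathcal E$ without an independent argument (for instance, a Radon--Nikod\'ym-type construction inside the commutative algebra $Z(M)$ showing that for each $y$ there exists $a\in\mathcal A_h$ with $\Phi(aw)=\Phi(yw)$ for every $w\in Z(M)$, and that this $a$ equals your $\mathcal E(y)$). Your ``main obstacle'' paragraph correctly flags the first black box but misses this second, equally essential, one.
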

\begin{proof} Since  $s(\Phi(\mathbf{1}))=\mathbf{1}_F,$ we
get that $\Phi_1(x)=\Phi(\mathbf{1})^{-1}\Phi(x)$ is  a
$(C(Q))_\mathbb{C}$-valued Maharam trace on $M.$  In addition,
$\Phi_1(\mathbf{1})=\mathbf{1}_F.$

The set $Z_h(M)$ is an order complete vector lattice with a strong
unit $\mathbf{1}$ with respect to algebraic operations, and the
partial order induced from $M_h.$ Moreover, the Boolean algebra of
all unitary elements in $Z_h(M)$ with respect to $\mathbf{1}$
coincides with $P(Z(M)).$ Let $T$ be a restriction of $\Phi_1$ on
$Z_h(M).$ Since $|\Phi_1(x)| \leq \|x\|_M,$ $T(Z_h(M))\subset
C(Q).$ It is clear that $T$ is an  injective positive order
continuous linear operator. If $x\in Z_+(M),$ $0\leq a \leq
Tx=\Phi_1(x),$ $a\in C(Q),$ then there exists $y\in M_+$ such that
$y\leq x$ and $\Phi_1(y)=a.$ By Proposition \ref{3.2.} $(iv),$ we
have $a=\Phi_1(y)=\Phi_1(\Phi_M(y)) =T(\Phi_M(y)),$ moreover,
$0\leq \Phi_M(y)\leq \Phi_M(x) =x.$ Hence, $T: Z_h(M)\to C(Q)$ is
a Maharam operator (\cite{Ku1}, 3.4.1).  Theorem 3.4.3 from
\cite{Ku1} guarantees the existence of a Boolean isomorphism
$\varphi$ from $B(F)$ onto a regular Boolean subalgebra $B$ in
$P(Z(M))$ such that $gT(x)=T(\varphi(g)x)$ for all $g\in B(F)$ and
$x\in Z_h(M).$ We denote by $\mathcal{A}$ a commutative von
Neumann subalgebra in $Z(M)$ generated by $B,$ i.e. $\mathcal{A}$
coincides with the bicommutant of $B.$  It is known that
$\mathcal{A}_h=\{x\in Z_h(M): E_\lambda(x) \in B \mbox{ for all}
~\lambda\}$ where $\{E_\lambda (x)\}$ are the spectral
 projections of $x.$  The Boolean isomorphism $\varphi$ is
extended  to the $*$-isomorphism $\widetilde{\varphi}$ from the
$*$-algebra $C(Q)_\mathbb{C}$ onto the von Neumann algebra
$\mathcal{A}.$ If  $a=\sum\limits_{i=1}^n \lambda_ie_i$ is a
simple element, $\lambda_i\in \mathbb{R},$ $e_i\in B(F),$
$i=1,\dots,n,$ then
$$
T(\widetilde{\varphi}(a)x)=\sum\limits_{i=1}^n\lambda_iT(\varphi(e_i)x)
=aT(x)
$$
for all $x\in \mathcal{A}_h.$ Furthermore, we note
$T(\widetilde{\varphi}(a)x)=aT(x)$ for any $a\in C(Q),$ $x\in
\mathcal{A}_h.$ This is obtained by approximating the elements
from $C(Q)$ by simple elements. Therefore,
$\Phi_1(\widetilde{\varphi}(a)x)= a \Phi_1 (x)$ for all $a\in
C(Q)_\mathbb{C},$ $x\in\mathcal{A},$  in particular,
$$\Phi_1(\widetilde{\varphi}(a))=a    \eqno (1)$$

Hence the restriction $T_0$ of the operator $T$ on $\mathcal{A}_h$
is a lattice isomorphism from $\mathcal{A}_h$ onto $C(Q).$
Therefore $T_0$ is a Maharam operator. By Theorem 4.2.9 from
\cite{Ku2}, there exists an operator of conditionally mathematical
expectation $\mathcal{E}: Z_h(M) \to \mathcal{A}_h$ satisfying the
following conditions:

$(E1)$ $\mathcal{E}$ is an  injective positive order continuous
linear operator, $\mathcal{E}^2=\mathcal{E}$ and
$\mathcal{E}(\mathbf{1})=\mathbf{1};$

$(E2)$ $T(xy)=T(x\mathcal{E}(y))$ for all $x,y \in Z_h(M);$

$(E3)$ $\mathcal{E}(zy)=z\mathcal{E}(y)$ for all $z\in
\mathcal{A}_h,~y\in Z_h(M).$

The operator $\mathcal{E}$ is  extended  to the operator
$\widetilde{\mathcal{E}}: Z(M) \to \mathcal{A}.$ It is clear that
the condition $(E1)$ is satisfied for $\widetilde{\mathcal{E}},$
the condition $(E2)$ has the form
$\Phi_1(xy)=\Phi_1(x\widetilde{\mathcal{E}}(y))$ for all $x,y \in
Z(M),$ and the condition $(E3)$ is valid for all $z\in
\mathcal{A},$ $y\in Z(M).$ The condition $(E2)$ implies that
$$
\Phi_1(y)=\Phi_1(\widetilde{\mathcal{E}}(y))~\mbox{ for all}~ y
\in Z(M). \eqno(2)
$$

Using   equalities (1), (2) and Proposition \ref{3.2.} $(iv)$, we
get
$$
\Phi_1(x)=\Phi_1(\Phi_M(x))=\Phi_1(\widetilde{\mathcal{E}}(\Phi_M(x))=\widetilde{\varphi}^{-1}(\widetilde{\mathcal{E}}(\Phi_M(x)))
\eqno (3)
$$
for any $x\in M.$

Taking in (3) $\psi=\widetilde{\varphi}^{-1}$ and letting
$\widetilde{\mathcal{E}}$ as $\mathcal{E},$ we obtain the
statement of Theorem \ref{3.3.}.
\end{proof}

Due to Theorem \ref{3.3.},  the $*$-algebra
$\mathcal{B}=C(Q)_\mathbb{C}$ is  $*$-isomorphic to a von Neumann
subalgebra in $Z(M).$ Therefore $\mathcal{B}$ is a commutative von
Neumann algebra, and  $*$-algebra $C_\infty(Q)_\mathbb{C}$ is
identified with  $*$-algebra $S(\mathcal{B}).$ In particular,
there exists a separating family of completely additive
scalar-valued measures on $B(F),$ and therefore $F$ is a
Kantorovich-Pinsker space (\cite{Ku1}, 1.4.10).

We claim that a version of Radon-Nikodym-type theorem is valid for
a Maharam trace $\Phi.$ For this, we need the space $L^1(M,\Phi)$
of operators from $S(M)$ to be integrable with respect to $\Phi.$

Let $F$ be a Kantorovich-Pinsker space and let $\Phi$ be an
$F_\mathbb{C}$-valued Maharam trace on the von Neumann algebra
$M.$ The net $\{x_\alpha\}\subset S(M)$ converges to $x\in S(M)$
with respect to the trace $\Phi$ (notation: $x_\alpha
\stackrel{\Phi}{\longrightarrow}x$) if $\Phi(E_\lambda^\bot(|
x_\alpha-x|))\stackrel{t(\mathcal{B})}{\longrightarrow}0$ for all
$\lambda>0.$

\begin{pr} \label{3.3} $x_\alpha
\stackrel{\Phi}{\longrightarrow}x$ iff $x_\alpha
\stackrel{t(M)}{\longrightarrow}x.$
\end{pr}
\begin{proof} Let $\nu$ be a faithful normal semi-finite numerical
trace on $\mathcal{B}.$  Choose  $\{e_i\}_{i\in I}$ to be a set of
nonzero mutually orthogonal projections from $P(\mathcal{B})$ with
$\sup\limits_{i\in I}e_i=\mathbf{1}_F$ and $\nu(e_i)<\infty, i\in
I.$ Set $\tau_i(x)=\nu(\Phi(x)\Phi(\mathbf{1})^{-1}e_i),~x\in
M,~i\in I.$ It is clear that $\{\tau_i\}_{i\in I}$ is a separating
family of finite traces on $M.$ Due to Proposition \ref{2.5.},
$x_\alpha\stackrel{t(M)}{\longrightarrow}x$ if and only if
$\tau_i(E_\lambda^\bot(|x_\alpha-x|))\to 0$ for all $\lambda>0,
i\in I.$ The last convergence is equivalent to convergence $\Phi(
E_\lambda^\bot(|x_\alpha-x|))\stackrel{t(\mathcal{B})}{\longrightarrow}0.$
\end{proof}

For each $x\in M,$ let $\|x\|_\Phi=\Phi(|x|).$ Proposition
\ref{3.2.} implies that $\|\cdot\|_\Phi$ is an $F$-valued norm on
$M.$ In addition, $\|x\|_\Phi=\|x^*\|_\Phi=\|\,|x|\,\|_\Phi$ and $
\|axb\|_\Phi \leq \|a\|_M\|b\|_M\|x\|_\Phi$ for all $x, a, b \in
M.$

We have  $\Phi(E_\lambda^\bot(|x_\alpha-x|))\leq
\frac{1}{\lambda}\Phi(|x_\alpha-x|),$ $\lambda>0,$ $x_\alpha, x\in
M.$ Hence  $\|x_\alpha-x\|_\Phi \stackrel{t(\mathcal{B}
)}{\longrightarrow} 0$ implies  $x_\alpha \stackrel{\Phi}{\to} x,$
and therefore $x_\alpha \stackrel{t(M)}{\to} x$ (Proposition
\ref{3.3}).

An operator  $x\in S(M)$ is said to be {\em $\Phi$-integrable} if
there exists a sequence $\{x_n\} \subset M$ such that $x_n
\stackrel{\Phi}{\to} x $ and $\|x_n-x_m\|_\Phi
\stackrel{t_l(\mathcal{B})}{\longrightarrow} 0$ as $n,m \to
\infty.$ Denote by $L^1(M,\Phi)$ the set of all $\Phi$-integrable
operators from $S(M).$ It is clear that $M \subset L^1(M,\Phi)$
and $L^1(M,\Phi)$ is a linear subset of $S(M).$ It follows from
Proposition \ref{3.2.} and \ref{3.3} that $M L^1(M,\Phi) M \subset
L^1(M,\Phi)$ and $x^*\in L^1(M,\Phi)$ for all $x\in L^1(M,\Phi).$

We now define  an $S_h(\mathcal{B})$-valued $L^1$-norm on
$L^1(M,\Phi).$

\begin{pr}\label{4.2.} If $x_n\in M, ~x_n
\stackrel{\Phi}{\to} 0,~\|x_n-x_m\|_\Phi
\stackrel{t(\mathcal{B})}{\longrightarrow} 0,$ then $\Phi(|x_n|)
\stackrel{t(\mathcal{B})}{\longrightarrow} 0.$
\end{pr}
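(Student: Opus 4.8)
The plan is to first pin down the $t(\mathcal{B})$-limit of $\{\Phi(|x_n|)\}$ and then prove it is zero. By Proposition \ref{3.2.}$(v)$ one has $\Phi(|x_n|)\le\Phi(|x_m|)+\Phi(|x_n-x_m|)$ and symmetrically, so in $S_h(\mathcal{B})=C_\infty(Q)$ the estimate $|\Phi(|x_n|)-\Phi(|x_m|)|\le\Phi(|x_n-x_m|)$ holds. Since the right-hand side tends to $0$ in $t(\mathcal{B})$ and the locally measure topology is solid, $\{\Phi(|x_n|)\}$ is $t(\mathcal{B})$-Cauchy; using completeness of $(S(\mathcal{B}),t(\mathcal{B}))$, $\Phi(|x_n|)\stackrel{t(\mathcal{B})}{\longrightarrow}g$ for some $g\in S_h(\mathcal{B})$, $g\ge 0$. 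Fixing a faithful normal semifinite trace $\nu$ on $\mathcal{B}$ and a partition $\{e_i\}$ of $\mathbf{1}_F$ into projectors of finite $\nu$-measure, it suffices to show $g\,e_i=0$ for each $i$; on each such band $t(\mathcal{B})$ is convergence in $\nu$-measure (hence metrizable), so by the Urysohn subsequence principle it is enough that every subsequence of $\{\Phi(|x_n|)e_i\}$ admits a further subsequence converging to $0$ $\nu$-a.e.

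For the main step, fix a subsequence. From $\Phi(|x_n-x_m|)\stackrel{t(\mathcal{B})}{\longrightarrow}0$ I extract, diagonally over the bands $e_i$, a further subsequence $\{x_{n_k}\}$ with $\nu\big(e_i\wedge\{\Phi(|x_{n_{k+1}}-x_{n_k}|)>2^{-k}\}\big)<2^{-k}$, so that by Borel--Cantelli the series $\sum_k\Phi(|x_{n_{k+1}}-x_{n_k}|)$ converges $\nu$-a.e. and its tails $R_K:=\sum_{k\ge K}\Phi(|x_{n_{k+1}}-x_{n_k}|)$ satisfy $R_K\to0$ $\nu$-a.e. Telescoping with Proposition \ref{3.2.}$(v)$ yields $\Phi(|x_{n_L}-x_{n_K}|)\le R_K$ for all $L>K$. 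Simultaneously, $x_n\stackrel{\Phi}{\longrightarrow}0$ gives $x_n\stackrel{t(M)}{\longrightarrow}0$ by Proposition \ref{3.3}, so after passing to one more subsequence (diagonalizing over rational $\lambda$) the convergences $\Phi(E_\lambda^\bot(|x_{n_k}|))\to0$ hold $\nu$-a.e.

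To turn the a.e. summability of the $\Phi$-increments into decay of $\Phi(|x_{n_k}|)$, I pass to the direct integral decomposition of $M$ over the central subalgebra $\mathcal{A}\cong\mathcal{B}$ furnished by Theorem \ref{3.3.}: writing $M=\int^\oplus M(y)\,d\eta(y)$ with $\mathcal{A}$ as diagonal, the $\mathcal{A}$-modular finite trace $\psi\circ\mathcal{E}\circ\Phi_M$ of parts $1)$--$3)$ disintegrates into finite normal traces $\tau_y$ on the fibers $M(y)$, with $\Phi(|x|)(y)=\Phi(\mathbf{1})(y)\,\tau_y(|x(y)|)$. Then $\Phi(|x_{n_L}-x_{n_K}|)\le R_K$ says that for $\eta$-a.e.\ $y$ the sequence $\{x_{n_k}(y)\}$ is Cauchy in $L^1(M(y),\tau_y)$, while $x_{n_k}(y)\to0$ in $\tau_y$-measure for a.e.\ $y$. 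The classical single-algebra fact now applies fiberwise: completeness of $L^1(M(y),\tau_y)$ produces a norm limit, which must equal $0$ by uniqueness of the measure limit, so $\tau_y(|x_{n_k}(y)|)\to0$ by the reverse triangle inequality for $\|\cdot\|_{1,\tau_y}$. Hence $\Phi(|x_{n_k}|)(y)\to0$ for a.e.\ $y$, i.e.\ $\Phi(|x_{n_k}|)e_i\to0$ $\nu$-a.e., completing the subsequence extraction and therefore the proof.

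The hard part is precisely this conversion. The naive route — applying the separating finite numerical traces $\tau_i(x)=\nu(\Phi(x)\Phi(\mathbf{1})^{-1}e_i)$ of Proposition \ref{3.3} and invoking the scalar $L^1$-theorem on $M$ — fails, because $t(\mathcal{B})$-convergence of $\Phi(|x_n-x_m|)$ does \emph{not} force the integrals $\tau_i(|x_n-x_m|)$ to vanish, so $\{x_n\}$ need not be $L^1(\tau_i)$-Cauchy; mass could escape to large fiberwise values. It is the passage to a subsequence with $\nu$-a.e.\ summable $\Phi$-increments, combined with the disintegration over $\mathcal{B}$, that restores genuine $L^1$-Cauchyness on almost every fiber, where the completeness-plus-uniqueness argument can finally be run.
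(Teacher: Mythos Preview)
Your overall structure is sound, but the route is considerably heavier than the paper's, and it carries a genuine technical risk. The paper avoids direct integrals altogether by a trick your final paragraph almost discovers but then dismisses: rather than use the unweighted traces $\tau_i(x)=\nu(\Phi(x)\Phi(\mathbf{1})^{-1}e_i)$, one \emph{manufactures} the missing domination. After reducing to $\sigma$-finite $\mathcal{B}$ and passing to a subsequence with $\Phi(|x_n|)\stackrel{(o)}{\longrightarrow}f$, the element $g:=\sup_{n}\Phi(|x_n|)$ exists in $S_h(\mathcal{B})$, and
\[
\tau(x)=\nu\bigl(\Phi(x)(\mathbf{1}_F+g+\Phi(\mathbf{1}))^{-1}\bigr)
\]
is a faithful normal finite numerical trace on $M$. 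Since $0\le\Phi(|x_n-x_m|)\le 2g$, the integrand in $\tau(|x_n-x_m|)$ is uniformly bounded, so $\Phi(|x_n-x_m|)\stackrel{t(\mathcal{B})}{\longrightarrow}0$ \emph{does} force $\tau(|x_n-x_m|)\to 0$ by dominated convergence. Completeness of $L^1(M,\tau)$, together with $t_\tau=t(M)$, then gives the $L^1$-limit equal to $0$ and hence $\tau(|x_n|)\to 0$, which unwinds to $\Phi(|x_n|)\stackrel{\nu}{\longrightarrow}0$. Your diagnosis that the naive $\tau_i$ fail is correct; the point is that the weight $(\mathbf{1}_F+g+\Phi(\mathbf{1}))^{-1}$ repairs exactly that failure globally, with no fibrewise analysis required.

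The direct-integral argument you outline is plausible but not free. Reduction theory over a central subalgebra in the standard references (Dixmier, Takesaki) assumes $M$ acts on a separable Hilbert space, and nothing in the present setting guarantees this; restricting to a band $e_i$ with $\nu(e_i)<\infty$ makes $\mathcal{B}e_i$ $\sigma$-finite but does not make $H$ separable. You would need either to justify the disintegration $\Phi(|x|)(y)=\Phi(\mathbf{1})(y)\,\tau_y(|x(y)|)$ in this generality or to insert a separability reduction; the paper's weighted-trace device sidesteps the issue entirely.
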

\begin{proof} Since $\left|\,\|x_n\|_\Phi -\|x_m\|_\Phi\, \right| \leq
\|x_n  -x_m\|_\Phi,$  $\Phi(|x_n|)=\|x_n\|_\Phi$ is a Cauchy
sequence  in $(S(\mathcal{B}),t(\mathcal{B})).$ Because of the
completeness of  $*$-algebra $(S(\mathcal{B}), t(\mathcal{B})),$
there exists $f\in S_+(\mathcal{B})$ such that
$\Phi(|x_n|)\stackrel{t(\mathcal{B})}{\longrightarrow} f.$ We
claim that $f=0.$ First, we assume that  algebra $\mathcal{B}$ is
$\sigma$-finite. Then there exists a faithful normal finite
numerical trace $\nu$ on $\mathcal{B}.$ We have
$\Phi(|x_n|)\stackrel{\nu}{\longrightarrow}f$ and the sequence
$\{\Phi(|x_n|)\}$  has an $(o)$-convergent subsequence. Therefore,
as usual, we may and do assume that  the sequence
$\{\Phi(|x_n|)\}$ $(o)$-converges to $f$ in $S_h(\mathcal{B})$
(notation: $\Phi(|x_n|)\stackrel{(o)}{\longrightarrow} f$). Hence,
there exists $g=\sup\limits_{n\geq 1} \Phi(|x_n|)$ in
$S_h(\mathcal{B}).$ It is clear that
$$
\tau(x)=\nu(\Phi(x)(\mathbf{1}_F + g +\Phi(\mathbf{1}))^{-1})
\eqno(4)
$$
is a faithful normal finite numerical trace on $M.$ Since
topologies $t_\nu$ and $t(\mathcal{B})$ coincide, $\Phi(|x_n -
x_m|) \stackrel{\nu}{\longrightarrow}0.$ Therefore inequalities
$0\leq \Phi(|x_n- x_m|)\leq 2g,$ imply $\tau(|x_n-x_m|) \to 0.$ It
is known that  $(L^1(M,\tau), \|\,\|_{1,\tau})$ is complete, where
$\|x\|_{1,\tau}=\tau(|x|) $ \cite{Seg.}. Hence there exists $x\in
L^1(M, \tau) \subset S(M)$ such that $\|x - x_n\|_{1,\tau} \to 0$
and therefore, $x_n \stackrel{\tau}{ \longrightarrow} x$
\cite{Fac}.  Because of the  equality of topologies $t_\tau$ and
$t(M),$ we have $x=0.$  This means that $\tau(|x_n|)\to 0,$ i.e.
$\Phi(|x_n|)\stackrel{\nu}{ \longrightarrow}0.$

Now let $\mathcal{B}$ be a general (not necessarily
$\sigma$-finite) von Neumann algebra. For each $0\neq e\in
P(\mathcal{B}),$ we set $\Phi_e(x)= \Phi(x)e,~x\in M.$ It is clear
that $\Phi_e$ is a normal $S_h(\mathcal{B}e)$-valued trace on $M,$
which does not have, generally speaking, the faithfulness
property. A projection $s(\Phi_e)=\mathbf{1}-\sup\{p\in P(M):
\Phi_e(p)=0\}$ is called {\em the support}  trace of $\Phi_e.$ As
well as in the case of numerical traces ( see, for example,
\cite{SZ.}, 5.15, 7.13), one can establish that $s(\Phi_e)\in
P(Z(M))$ and $\Phi_e(x)=\Phi_e(xs( \Phi_e))$ is a faithful normal
$S_h(e\mathcal{B})$-valued trace on $Ms(\Phi_e).$

If $\Phi(|x_n|)\stackrel{t(\mathcal{B})}{\nrightarrow}0,$ then
there is a nonzero $\sigma$-finite projection $e\in
P(\mathcal{B})$ such that $\Phi(|x_n|)e
\stackrel{\nu}{\nrightarrow}0$ where $\nu$ is a faithful normal
finite numerical trace on $\mathcal{B}e.$ The last contradicts to
what we proved above.
\end{proof}

Let $x\in L^1(M,\Phi),$ $x_n\in M,$ $x_n \stackrel{\Phi}{
\longrightarrow}x$ and $\|x_n - x_m\|_\Phi \stackrel{t(
\mathcal{B})}{\longrightarrow}0.$ The inequality
$|\Phi(x_n)-\Phi(x_m)|\leq \Phi(|x_n-x_m|)$ and completeness of
the $*$-algebra $(S(\mathcal{B}), t(\mathcal{B}))$ guarantees the
 existence of $\widehat{\Phi}(x)\in S(\mathcal{B})$ such that
$\Phi(x_n)\stackrel{t(\mathcal{B})}{\longrightarrow}\widehat{\Phi}(x).$
Due to Proposition \ref{4.2.},  $\widehat{\Phi} (x)$ does not
depend on the choice of a sequence $\{x_n\}\subset M,$ for which
$x_n \stackrel{\Phi}{\longrightarrow}x$ and $\|x_n- x_m\|_\Phi
\stackrel{t(\mathcal{B})}{\longrightarrow}0,$ in particular,
$\widehat{\Phi}(x)=\Phi(x)$ for all $x\in M.$ The element
$\widehat{\Phi}(x)$  is called an {\em $S(\mathcal{B})$-valued
integral} of $x\in L^1(M,\Phi)$ by a trace $\Phi.$

It follows immediately from the definition of $\widehat{\Phi}$ and
Proposition \ref{3.2.}  that $\widehat{\Phi}$ is a linear mapping
from $ L^1(M,\Phi)$ into $S(\mathcal{B})$ and $
\widehat{\Phi}(xy)= \widehat{\Phi}(yx)$ for any $x\in M, y\in
L^1(M,\Phi).$ For each $x\in L^1(M,\Phi),$ we set
$\|x\|_\Phi=\widehat{\Phi}(|x|).$

\begin{thm}\label{3.5.} $(i)$ The mapping $\|\cdot\|_\Phi$ is an
$S_h(\mathcal{B})$-valued norm on $L^1(M,\Phi).$

$(ii)$ $(L^1(M,\Phi), \|\cdot\|_\Phi)$ is a Banach-Kantorovich
space.
\end{thm}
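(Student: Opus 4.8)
```latex
The plan is to prove the two assertions of Theorem~\ref{3.5.} by
combining the properties of $\widehat{\Phi}$ established above with the
completeness of $(S(\mathcal{B}),t(\mathcal{B}))$ and a density
argument that pushes the $M$-level estimates up to $L^1(M,\Phi)$.

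For part $(i)$, I would first record the algebraic estimates for
$\|\cdot\|_\Phi=\widehat{\Phi}(|\cdot|)$ on $L^1(M,\Phi)$ by extending
the corresponding inequalities already available on $M$. Concretely, if
$x\in L^1(M,\Phi)$ and $\{x_n\}\subset M$ is an approximating sequence
with $x_n\stackrel{\Phi}{\to}x$ and $\|x_n-x_m\|_\Phi
\stackrel{t(\mathcal{B})}{\to}0$, then by Proposition~\ref{2.5.}
(applied through Proposition~\ref{3.3}) we have
$|x_n|\stackrel{t(M)}{\to}|x|$, hence $|x_n|\stackrel{\Phi}{\to}|x|$, so
$\widehat{\Phi}(|x_n|)\stackrel{t(\mathcal{B})}{\to}\widehat{\Phi}(|x|)$.
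Passing to the limit in the continuous (with respect to
$t(\mathcal{B})$) lattice and algebraic operations of $S(\mathcal{B})$,
the inequalities $\|axb\|_\Phi\leq\|a\|_M\|b\|_M\|x\|_\Phi$ and the
triangle inequality $\|x+y\|_\Phi\leq\|x\|_\Phi+\|y\|_\Phi$ (which on
$M$ come from Proposition~\ref{3.2.}$(v)$) survive the limit. The only
genuinely nontrivial point is \emph{definiteness}: if
$\widehat{\Phi}(|x|)=0$ then $x=0$. Here I would use that
$\widehat{\Phi}(|x|)=0$ forces $\Phi(E_\lambda^\bot(|x|))=0$ for every
$\lambda>0$ via the estimate $\Phi(E_\lambda^\bot(|x|))\leq
\tfrac1\lambda\Phi(|x|)$ together with faithfulness of $\Phi$ (through
the support-trace decomposition used in Proposition~\ref{4.2.}),
yielding $E_\lambda^\bot(|x|)=0$ for all $\lambda>0$, i.e. $|x|=0$.

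For part $(ii)$, I would verify $(bo)$-completeness: every sequence
$\{x_k\}\subset L^1(M,\Phi)$ that is Cauchy for the $S_h(\mathcal{B})$-valued
norm (meaning $\|x_k-x_l\|_\Phi\stackrel{t(\mathcal{B})}{\to}0$) converges
in $L^1(M,\Phi)$. The strategy is a diagonal/approximation argument. Each
$x_k$ is approximated by elements of $M$, so I would select
$y_k\in M$ with $\|x_k-y_k\|_\Phi$ small (in the $t(\mathcal{B})$ sense),
making $\{y_k\}\subset M$ itself $\|\cdot\|_\Phi$-Cauchy. The estimate
$\Phi(E_\lambda^\bot(|y_k-y_l|))\leq\tfrac1\lambda\Phi(|y_k-y_l|)$ shows
$\{y_k\}$ is $t(M)$-Cauchy, and since $(S(M),t(M))$ is complete, there is
$x\in S(M)$ with $y_k\stackrel{t(M)}{\to}x$, equivalently
$y_k\stackrel{\Phi}{\to}x$ by Proposition~\ref{3.3}. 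Then $x\in L^1(M,\Phi)$
by definition, and one checks $\|x_k-x\|_\Phi\stackrel{t(\mathcal{B})}{\to}0$
by combining the Cauchy property with the triangle inequality and
lower-semicontinuity of $\|\cdot\|_\Phi$ under $\Phi$-convergence.

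The main obstacle I anticipate is the \emph{decoupling of the two
topologies}: $\Phi$-convergence controls the operators at the level of
$S(M)$, while the norm $\|\cdot\|_\Phi$ lives in $S(\mathcal{B})$ and its
completeness is $t(\mathcal{B})$-completeness. The delicate step is
showing that the limit $x$ produced by $t(M)$-completeness actually lies
in $L^1(M,\Phi)$ \emph{and} that the $S(\mathcal{B})$-valued norms
converge to $\|x\|_\Phi$, not merely to some larger element. This requires
a lower-semicontinuity statement for $\widehat{\Phi}(|\cdot|)$ along
$\Phi$-convergent sequences, which in turn rests on
Proposition~\ref{4.2.} (to pin down that Cauchy norm-sequences cannot
leak mass) and on the support-trace reduction to the $\sigma$-finite case,
where $(o)$-convergence of the numerical surrogates $\tau(|x_k|)$ can be
invoked. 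Once this semicontinuity is secured, the Banach-Kantorovich
(i.e. $(bo)$-complete, decomposable $F$-normed) structure follows in the
standard way.
```
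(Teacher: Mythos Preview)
Your plan has the right architecture for the completeness half of (ii), but there are two genuine gaps.

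\medskip
\textbf{Gap in part (i).} You write that from $|x_n|\stackrel{\Phi}{\longrightarrow}|x|$ it follows that $\widehat{\Phi}(|x_n|)\stackrel{t(\mathcal{B})}{\longrightarrow}\widehat{\Phi}(|x|)$. This inference is not available: $\widehat{\Phi}(|x|)$ is not even defined until you know $|x|\in L^1(M,\Phi)$, and $\Phi$-convergence of $|x_n|$ to $|x|$ does not by itself yield that. What is missing is precisely the proof that $\|\,|x_n|-|x_m|\,\|_\Phi\stackrel{t(\mathcal{B})}{\longrightarrow}0$. The paper obtains this by reducing (via supports $s(\Phi_e)$) to the $\sigma$-finite case, passing to the scalar trace $\tau$ of equation~(4), and invoking the $L^1(M,\tau)$ result of Fack--Kosaki that $\|x_n\|_{1,\tau}\to\|x\|_{1,\tau}$ together with $|x_n|\stackrel{\tau}{\to}|x|$ forces $\|\,|x_n|-|x|\,\|_{1,\tau}\to 0$. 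Without this step you cannot conclude $|x|\in L^1(M,\Phi)$, and then neither the positivity of $\|x\|_\Phi$ nor your limiting argument for the triangle inequality goes through. Your alternative route to definiteness via $\Phi(E_\lambda^\bot(|x|))\leq \frac{1}{\lambda}\widehat{\Phi}(|x|)$ also presupposes this, since that Chebyshev-type bound for $\widehat{\Phi}$ has not been established for general $x\in L^1(M,\Phi)$.

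\medskip
\textbf{Gap in part (ii).} A Banach--Kantorovich space is, by definition, a $(bo)$-complete \emph{decomposable} lattice-normed space. Your outline treats only $(bo)$-completeness (and, incidentally, identifies $(bo)$-Cauchy with $t(\mathcal{B})$-Cauchy, whereas $(bo)$-Cauchy means $\sup_{\alpha,\beta\geq\gamma}\|x_\alpha-x_\beta\|_\Phi\downarrow 0$ in the order of $S_h(\mathcal{B})$; the paper handles nets, not just sequences). The decomposition property --- given $\|x\|_\Phi=f_1+f_2$ with $f_1\wedge f_2=0$, produce $x_1,x_2$ with $x=x_1+x_2$ and $\|x_i\|_\Phi=f_i$ --- does \emph{not} follow ``in the standard way''. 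It is exactly here that the Maharam hypothesis enters: the paper uses Theorem~\ref{3.3.} to pull the supports $e_i=s(f_i)\in P(\mathcal{B})$ back through the isomorphism $\psi^{-1}$ to central projections $p_i\in P(\mathcal{A})\subset P(Z(M))$, sets $x_i=xp_i$, and then verifies $\|x_i\|_\Phi=e_i\widehat{\Phi}(|x|)=f_i$ using property 3) of Theorem~\ref{3.3.}. Without the Maharam structure there is no reason for such central projections to exist, so this part cannot be dismissed as routine.
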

\begin{proof}$(i)$  Let $x\in L^1(M,\Phi),~x_n\in
M,~x_n\stackrel{\Phi}{\longrightarrow}x$ and
$\|x_n-x_m\|_\Phi\stackrel{t(\mathcal{B})}{\longrightarrow}0.$ It
follows from Propositions \ref{2.5.}$(i)$ and \ref{3.3} that
$|x_n|\stackrel{\Phi}{\longrightarrow}|x|.$ We claim that
$\|\,|x_n|-|x_m|\,\|_\Phi\stackrel{t(\mathcal{B})}{\longrightarrow}0.$

First, we assume that  algebra $\mathcal{B}$ is $\sigma$-finite.
Using the same trick as in the  proof of Proposition \ref{4.2.},
we can show that $\Phi(x_n) \stackrel{(o)}{\longrightarrow}
\widehat{\Phi}(x)$ in $S_h(\mathcal{B}).$ Therefore there exists
$g=\sup\limits_{n\geq 1}|\Phi(x_n)|$ in $S_h(\mathcal{B}).$
Consider a faithful normal finite numerical trace $\tau$ on $M$
defined by $(4).$ Since $\tau(| x_n -x_m|)\to 0$ as $n,m \to
\infty$ (see the proof of Proposition \ref{4.2.}), there exists
$y\in L^1(M,\tau)$ such that $\|y-x_n\|_{1,\tau}\to 0.$ Then
$x_n\stackrel{\tau}{ \longrightarrow}y,$ and therefore $x=y.$
Moreover, $|x_n| \stackrel{\tau}{\longrightarrow}|x|$ (Proposition
\ref{2.5.}$(i)$) and $\|\,|x_n|\,\|_{1,\tau} =\|x_n\|_{1,\tau} \to
\|x\|_{1,\tau}.$ It follows from (\cite{Fac}, Theorem 3.7) that
$\|\,|x|-|x_n|\, \|_{1,\tau} \to 0,$ in particular,
$\tau(\left|\,|x_n|-|x_m|\, \right|) \to 0$ as $n,m \to \infty.$
Convergence $\Phi(\left|\, |x_n|-|x_m|\,\right|)
(\mathbf{1}_F+g+\Phi(\mathbf{1}))^{-1}
\stackrel{\nu}{\longrightarrow}0$ implies $\|\,|x_n|-|x_m|
\,\|_\Phi\stackrel{t(\mathcal{B})}{\longrightarrow}0.$

Hence, $|x|\in L^1(M,\Phi)$ and $\Phi(|x_n|)
\stackrel{t(\mathcal{B})}{\longrightarrow}\widehat{\Phi}(|x|).$ In
particular, $\|x\|_\Phi=\widehat{\Phi}(|x|)\geq 0$ for all $x\in
L^1(M,\Phi).$ If $\widehat{\Phi}(|x|)=0,$ then $0\leq \|x_n
\|_\Phi=\Phi(|x_n|)\stackrel{t(\mathcal{B})}{\longrightarrow}0.$
Hence, $x_n\stackrel{\Phi}{\longrightarrow}0,$ and therefore
$x=0.$

Let now $\mathcal{B}$ be not a $\sigma$-finite algebra. Let
$\{e_i\}_{i\in I}$ be a family of nonzero mutually orthogonal
$\sigma$-finite projections in $\mathcal{B}$  with
$\sup\limits_{i\in I}e_i=\mathbf{1}_F.$ Since $\sup\limits_{i\in
I}s(\Phi_{e_i})=\mathbf{1}$ and $\widehat{\Phi}(|x|)e_i=
\widehat{\Phi}_{e_i}(|x|s(\Phi_{e_i}))\geq 0 $ for all $i\in I,$
we get $\widehat{\Phi}(|x|)\geq 0.$ Similarly, the equality
$\Phi(|x|)=0$ implies $\widehat{\Phi}_{e_i}(|x|s(\Phi_{e_i}))= 0,$
and therefore $|x|s(\Phi_{e_i})= 0$ for all $i\in I.$ Hence,
$x=0.$

Finally, we have $$\|x+y\|_\Phi \leq \|x\|_\Phi+\|y\|_\Phi,~
x,y\in L^1(M,\Phi),$$ due to the inequality $|x+y|\leq u|x|u^* +
v|y|v^*,$  $x, y \in S(M)$ (see \cite{Mur_m}, \S 2.4) and the
trick in  Proposition \ref{3.2.} $(v).$

$(ii)$ Let $x\in L^1(M,\Phi),~ x_n\in M,~ x_n
\stackrel{\Phi}{\longrightarrow}x$ and $\|x_n - x_m\|_\Phi
\stackrel{t(\mathcal{B})}{\longrightarrow}0.$ Fix $m$ and set
$y_{nm}=x_n-x_m$ for $n\geq m.$ We have $y_{nm}
\stackrel{\Phi}{\longrightarrow}x-x_m$ and $\|y_{nm}-y_{km}\|_\Phi
\stackrel{t(\mathcal{B})}{\longrightarrow}0$ as $n,k\to \infty.$
It follows from the proof of  $(i)$ that $\Phi(|y_{nm}|)
\stackrel{t(\mathcal{B})}{\longrightarrow}\widehat{\Phi}(|x-x_m|)=\|x-x_m\|_\Phi.$
Since $\Phi(|y_{nm}|)\stackrel{t(\mathcal{B})}{\longrightarrow}0$
as $n,m\to \infty,$ $\|x-x_m\|_\Phi
\stackrel{t(\mathcal{B})}{\longrightarrow}0.$

Let us now  show that any $(bo)$-Cauchy sequence in $(L^1(M,
\Phi),\|\cdot\|_\Phi)$ $(bo)$-converges.

First, we assume that $\mathcal{B}$ is a $\sigma$-finite von
Neumann algebra. Let $\{x_n\}\subset L^1(M,\Phi)$ and
$\|x_n-x_m\|_\Phi \stackrel{(o)}{\longrightarrow}0.$ Since
$\widehat{\Phi}$ is a positive mapping (see the proof of item
$(i)$), the inequality
$\widehat{\Phi}(E_\lambda^\bot(|x_n-x_m|))\leq
\frac{1}{\lambda}\widehat{\Phi}(|x_n-x_m|),$ $\lambda> 0$ is
valid. Hence, $\{x_n\}$ is a Cauchy sequence in $(S(M), t(M))$ and
therefore there exists  $x\in S(M)$ such that
$x_n\stackrel{t(M)}{\longrightarrow}x.$ Choose a system $\{U_n\}$
of closed   neighborhoods of 0 in $(S(\mathcal{B}),
t(\mathcal{B}))$  with $U_{n+1} + U_{n+1} \subset U_n,~n=1,2,\dots
.$ Due to what we proved above, for any $x_n\in L^1 (M,\Phi),$
there exists  $y_n\in M$ such that $\|x_n-y_n\|_\Phi \in U_n.$
Since $\sum\limits_{n=k+1}^m \|x_n-y_n\|_\Phi \in U_k$ for all $m
\geq k+1,$  the series $\sum\limits_{n=k+1}^\infty
\|x_n-y_n\|_\Phi $ converges in $(S(\mathcal{B}), t(M)).$ Hence,
$\|x_n-y_n\|_\Phi\stackrel{(o)}{\longrightarrow}0,$ and therefore
$\|y_n-y_m\|_\Phi\stackrel{(o)}{\longrightarrow}0.$ Also, by
Proposition \ref{3.3}, we get $x_n-y_n
\stackrel{\Phi}{\longrightarrow}0,$ and consequently
$y_n\stackrel{\Phi}{\longrightarrow}x.$ This means that $x\in
L^1(M,\Phi),$ in addition,  $\|x-y_n\|_\Phi
\stackrel{t(\mathcal{B})}{\longrightarrow}0$ and $\|y_n-y_m\|_\Phi
\stackrel{t(\mathcal{B})}{\longrightarrow} \|x-y_m\|_\Phi$ as $n
\to \infty.$

Since $\|x-y_m\|_\Phi \leq \sup\limits_{n\geq m}\|y_n-y_m\|_\Phi
\downarrow 0,$ we get $\|x-y_m\|_\Phi
\stackrel{(o)}{\longrightarrow}0$ and therefore
$\|x-x_n\|_\Phi\stackrel{(o)}{\longrightarrow}0.$

Now let $\{x_\alpha\}_{\alpha\in A}$ be an arbitrary $(bo)$-Cauchy
net in $L^1(M,\Phi),$ i.e. $\sup\limits_{\alpha,\beta \geq
\gamma}\|x_\alpha -x_\beta\|_\Phi \downarrow 0.$ We choose  a
sequence of indices $\alpha_1 \leq\alpha_2\leq \dots
\leq\alpha_n\leq \dots $ in $A$ such that
$\sup\limits_{\beta\geq\alpha_n}\|x_\beta -x_{\alpha_n}\|_\Phi \in
U_n.$ Then $\sup\limits_{n,m\geq k}\|x_{\alpha_n} -
x_{\alpha_m}\|_\Phi \in U_k,$ and therefore $\{x_{\alpha_n}\}$ is
a $(bo)$-Cauchy sequence in $L^1(M,\Phi).$ It follows from what we
proved above that there exists  $x\in L^1(M,\Phi)$ such that $\|x-
x_{\alpha_n} \|_\Phi \stackrel{(o)}{\longrightarrow}0.$ Let us
claim that $\|x-x_\alpha\|_\Phi \stackrel{(o)}{\longrightarrow}0,$
i.e. $(\sup\limits_{\alpha\geq\beta}\|x-x_\alpha\|_\Phi)\downarrow
0.$ Fix $\beta \in A$ and consider the net $\{x_\alpha\}_{\alpha
\geq \beta}.$ We construct a sequence of indices $\beta\leq\beta_1
\leq\beta_2\leq\dots $ such that $\alpha_n \leq\beta_n.$ Then
$\|x_{\beta_n}-x_{\alpha_n}\|_\Phi\in U_n,$ and therefore
$\|x_{\beta_n}-x_{\alpha_n}\|_\Phi
\stackrel{(o)}{\longrightarrow}0.$ Hence,
$\|x-x_{\beta_n}\|_\Phi\stackrel{(o)}{\longrightarrow}0$ and
$\|x_{\beta_n}-x_\beta \|_ \Phi\stackrel{(o)}{\longrightarrow}\|
x-x_\beta\|_\Phi$ as $n\to \infty.$ Thus, $\|x-x_\beta\|_\Phi\leq
\sup\limits_{n\geq 1}\|x_{\beta_n}-x_\beta\|_\Phi \leq
\sup\limits_{\alpha\geq \beta}\|x_\alpha-x_\beta\|_\Phi$ and
$\|x-x_\beta\|_\Phi \stackrel{(o)}{\longrightarrow}0.$

Let now $\mathcal{B}$ be not a $\sigma$-finite algebra and let
$\{x_\alpha\}$ be a $(bo)$-Cauchy net in $L^1(M,\Phi).$ Due to the
completeness of $(S(M), t(M)),$ there is  $x\in S(M)$ such that
$x_\alpha\stackrel{\Phi}{\longrightarrow}x.$ Let $\{e_i\}_{i\in
I}$ be the same family of projections in $\mathcal{B},$ as in the
proof of  $(i).$ It is clear that $\{x_\alpha s(\Phi_{e_i})\}$ is
a $(bo)$-Cauchy net in $L^1(Ms(\Phi_{e_i}), \Phi_{e_i}),$ and
therefore, by virtue of what we proved above, there exists
$x_i\in L^1(Ms(\Phi_{e_i}), \Phi_{e_i})$ such that $\|x_i-x_\alpha
s(\Phi_{e_i}) \|_{\Phi_{e_i}} \stackrel{(o)}{\longrightarrow}0.$
Convergence $x_\alpha
s(\Phi_{e_i})\stackrel{\Phi}{\longrightarrow}xs (\Phi_{e_i})$
implies $x_i=xs(\Phi_{e_i})$ for all $i\in I.$ Thus,
$\widehat{\Phi}(|x-x_\alpha|)e_i=\widehat{\Phi}_{e_i}(|x_i-x_\alpha|s(\Phi_{e_i}))e_i\stackrel{(o)}{\longrightarrow}0$
and $\|x-x_\alpha\|_\Phi \stackrel{(o)}{\longrightarrow}0.$

Hence, $(L^1(M,\Phi),\|\cdot\|_\Phi)$ is a $(bo)$-complete
lattice-normed space.

Now let us show that $(L^1(M,\Phi),\|\cdot\|_\Phi)$ is a
Banach-Kantorovich space, i.e. for any element $x\in L^1(M,\Phi)$
and any decomposition $\|x\|_\Phi=f_1+f_2,$ $f_1,f_2\in
S_+(\mathcal{B}),~f_1\wedge f_2=0,$ there exist $x_1,x_2\in
L^1(M,\Phi)$ such that $x=x_1+x_2$ and $\|x_i\|_\Phi=f_i,~i=1,2.$

Set $e_i=s(f_i).$ It is clear that $e_i\in P(\mathcal{B}),$
$e_1e_2=0,$ $e_1+e_2=s(\|x\|_\Phi).$ Since $\Phi$ is a Maharam
trace, we have $\Phi(y)=\Phi(\mathbf{1})\psi(\mathcal{E}
(\Phi_M(y))),$ $y\in M$ (see Theorem \ref{3.3.}). Let
$p_i=\psi^{-1}(e_i),$ $x_i=xp_i.$ Since $p_i\in P(\mathcal{A})
\subset P(Z(M)),$  $|x_i|=|x|p_i\in L^1(M,\Phi).$ We choose
$y_n\in M$ such that $y_n\stackrel{\Phi}{\longrightarrow}x$ and
$\|y_n-y_m\|_\Phi\stackrel{t(\mathcal{B})}{\longrightarrow}0.$
Then $|y_n|\stackrel{\Phi}{\longrightarrow}|x|,$
$\|\,|y_n|-|y_m|\,\|_\Phi\stackrel{t(\mathcal{B})}{\longrightarrow}0$
and $\Phi(|y_n|)\stackrel{t(\mathcal{B})}{\longrightarrow}
\widehat{\Phi}(|x|)$ (see  the proof of $(i)$). Set
$y_n^{(i)}=y_np_i,$ $i=1,2.$ We have $|y_n^{(i)}|
\stackrel{\Phi}{\longrightarrow}|x_i|$ and
$\|\,|y_n^{(i)}|-|y_m^{(i)}|\,\|_\Phi\leq
\|\,|y_n|-|y_m|\,\|_\Phi.$ Hence,
$\Phi(|y_n^{(i)}|)\stackrel{t(\mathcal{B})}{\longrightarrow}\widehat{\Phi}(|x_i|).$
Due to the property 3) from Theorem \ref{3.3.}, we have
$\Phi(|y_n^{(i)}|)=\psi(p_i)\Phi(|y_n|)=e_i\Phi(|y_n|).$

Thus, $\|x_i\|_\Phi=\widehat{\Phi}(|x_i|)=e_i\Phi(|x|)=f_i,$ in
addition $x_1+x_2=x(p_1+p_2)=x\psi^{-1}(s(\|x\|_\Phi)).$ As well
as above, one can establish that
$q\widehat{\Phi}(|x|)=\widehat{\Phi} (|x|\psi^{-1}(q))$ for all
$q\in P(\mathcal{B}).$ Taking $q=\mathbf{1}_F-s(\|x\|_\Phi),$ we
get $\widehat{\Phi}(|x|) (\mathbf{1}-\psi^{-1}(s(\|x\|_\Phi)))=0.$
Hence, $|x|=|x|\psi^{-1} (s(\|x\|_\Phi)).$ Using the polar
decomposition $x=u|x|,$ we obtain
$x=x\psi^{-1}(s(\|x\|_\Phi))=x_1+x_2.$
\end{proof}

Note another useful properties of  mapping $\widehat{\Phi}.$

Let $\Phi,~M,~Q,~\Phi_M,~\mathcal{A},~\psi$ be the same as in
Theorem \ref{3.3.}, $\mathcal{B}=C(Q)_\mathbb{C}.$ It is clear
that the $*$-isomorphism $\psi$ from $\mathcal{A}$ onto
$\mathcal{B}$ can be extended to the $*$-isomorphism from
$S(\mathcal{A})$ onto $S(\mathcal{B}).$ We denote this mapping
also by $\psi.$

\begin{pr}\label{3.6.} $S(\mathcal{A})L^1(M,\Phi) \subset
L^1(M,\Phi),$ in particular,  $S(\mathcal{A}) \subset
L^1(M,\Phi),$ in addition,
$\widehat{\Phi}(zx)=\psi(z)\widehat{\Phi} (x)$ and
$\widehat{\Phi}(\widehat{\Phi}_M(zx))=\widehat{\Phi}(zx)$ for all
$z\in S(\mathcal{A}),~x\in L^1(M,\Phi).$
\end{pr}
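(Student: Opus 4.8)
The plan is to reduce every assertion to the already-established bounded case (Theorem \ref{3.3.} and Proposition \ref{3.2.}) by approximation, exploiting that multiplication by a fixed bounded element is continuous in $t(\mathcal{B})$ and that $(L^1(M,\Phi),\|\cdot\|_\Phi)$ is $(bo)$-complete (Theorem \ref{3.5.}). Throughout, for $x\in L^1(M,\Phi)$ I fix a sequence $x_n\in M$ with $x_n\stackrel{\Phi}{\to}x$ and $\|x_n-x_m\|_\Phi\stackrel{t(\mathcal{B})}{\to}0$, so that $\Phi(x_n)\stackrel{t(\mathcal{B})}{\to}\widehat{\Phi}(x)$. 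I shall also use repeatedly that $\widehat{\Phi}$ is $\|\cdot\|_\Phi$-continuous, since $|\widehat{\Phi}(a)-\widehat{\Phi}(b)|\leq\widehat{\Phi}(|a-b|)=\|a-b\|_\Phi$ (Proposition \ref{3.2.}$(i)$), and that for central $z$ one has $|zx|=|z|\,|x|$.

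First I would treat a bounded $z\in\mathcal{A}$. Then $zx_n\in M$, $zx_n\stackrel{t(M)}{\to}zx$ (hence $zx_n\stackrel{\Phi}{\to}zx$ by Proposition \ref{3.3}), and $\|z(x_n-x_m)\|_\Phi\leq\|z\|_M\|x_n-x_m\|_\Phi\stackrel{t(\mathcal{B})}{\to}0$; thus $zx\in L^1(M,\Phi)$. Passing to the limit in $\Phi(zx_n)=\psi(z)\Phi(x_n)$ (Theorem \ref{3.3.}, 3)) and using continuity of multiplication by $\psi(z)$ in $t(\mathcal{B})$ yields $\widehat{\Phi}(zx)=\psi(z)\widehat{\Phi}(x)$. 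To remove boundedness, for $z\in S(\mathcal{A})$ I would truncate by the spectral projections $q_k=E_k(|z|)\in P(\mathcal{A})$, so $zq_k\in\mathcal{A}$ and $q_k\uparrow\mathbf{1}$. By the bounded case $\|zq_kx\|_\Phi=\psi(|z|q_k)\widehat{\Phi}(|x|)\uparrow\psi(|z|)\widehat{\Phi}(|x|)\in S_+(\mathcal{B})$, so $\{zq_kx\}$ is $(bo)$-Cauchy in $L^1(M,\Phi)$; its $(bo)$-limit lies in $L^1(M,\Phi)$ by completeness and coincides with $zx$ because $zq_kx\stackrel{t(M)}{\to}zx$. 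This gives $S(\mathcal{A})L^1(M,\Phi)\subset L^1(M,\Phi)$ (and $S(\mathcal{A})\subset L^1(M,\Phi)$ on taking $x=\mathbf{1}$), and letting $k\to\infty$ in $\widehat{\Phi}(zq_kx)=\psi(zq_k)\widehat{\Phi}(x)$ delivers $\widehat{\Phi}(zx)=\psi(z)\widehat{\Phi}(x)$.

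For the iterated identity the crucial point is that the center-valued trace $\Phi_M$ is $\|\cdot\|_\Phi$-contractive. This rests on the operator inequality $|\Phi_M(y)|\leq\Phi_M(|y|)$ for $y\in M$: writing $y=u|y|$ and $\Phi_M(y)=w|\Phi_M(y)|$ with $w\in Z(M)$, and setting $c=w^*u$, one has $|\Phi_M(y)|=\Phi_M(\operatorname{Re}(c|y|))\leq\tfrac12\bigl(\Phi_M(|y|)+\Phi_M(c|y|c^*)\bigr)\leq\Phi_M(|y|)$, using $\operatorname{Re}(c|y|)\leq\tfrac12(|y|+c|y|c^*)$ and $\Phi_M(c|y|c^*)=\Phi_M(c^*c|y|)\leq\Phi_M(|y|)$. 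Combined with Proposition \ref{3.2.}$(iv)$ this gives $\|\Phi_M(y)\|_\Phi=\Phi(|\Phi_M(y)|)\leq\Phi(\Phi_M(|y|))=\|y\|_\Phi$. Hence $\Phi_M$ extends uniquely to a $\|\cdot\|_\Phi$-contraction $\widehat{\Phi}_M$ on $L^1(M,\Phi)$ with values in $S(Z(M))\cap L^1(M,\Phi)$, and $\widehat{\Phi}_M(x)$ is the $(bo)$-limit of $\Phi_M(x_n)$. Passing to the limit in $\Phi(\Phi_M(x_n))=\Phi(x_n)$ (Proposition \ref{3.2.}$(iv)$) and using continuity of $\widehat{\Phi}$ gives $\widehat{\Phi}(\widehat{\Phi}_M(x))=\widehat{\Phi}(x)$ for every $x\in L^1(M,\Phi)$; applying this to $zx\in L^1(M,\Phi)$ (first part) proves $\widehat{\Phi}(\widehat{\Phi}_M(zx))=\widehat{\Phi}(zx)$.

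The main obstacle is the unbounded step together with the contraction estimate: one must verify that the truncations $zq_kx$ really are $(bo)$-Cauchy, i.e. that $\psi(|z|)\widehat{\Phi}(|x|)$ is a genuine element of $S(\mathcal{B})$ (which it is, $S(\mathcal{B})$ being an algebra), and one must establish the inequality $|\Phi_M(y)|\leq\Phi_M(|y|)$, since it is this that makes $\widehat{\Phi}_M$ well defined on all of $L^1(M,\Phi)$ and is not recorded earlier in the paper. The remaining manipulations are routine passages to the limit justified by continuity of multiplication in $t(\mathcal{B})$ and by $(bo)$-completeness.
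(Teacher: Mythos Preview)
Your argument is correct. The first half—showing $S(\mathcal{A})L^1(M,\Phi)\subset L^1(M,\Phi)$ and the modularity identity $\widehat{\Phi}(zx)=\psi(z)\widehat{\Phi}(x)$—follows the same pattern as the paper: pass to the bounded case via Theorem~\ref{3.3.}(3), truncate $z$ by spectral projections, and use $(bo)$-completeness of $L^1(M,\Phi)$ to identify the limit. The paper reduces first to $z,x\geq 0$ and writes $z_nx=\sqrt{x}\,z_n\sqrt{x}$ to see the monotonicity directly, whereas you keep $z$ general and compute $\|zq_kx\|_\Phi=\psi(|z|q_k)\widehat\Phi(|x|)$; these are cosmetic variations of the same idea.

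For the iterated identity $\widehat{\Phi}(\widehat{\Phi}_M(zx))=\widehat{\Phi}(zx)$ the two proofs genuinely diverge. The paper stays entirely within the positive cone: having reduced to $z,x\geq 0$, it further truncates $x$ by $x_k=E_k(x)x\in M_+$, applies Proposition~\ref{3.2.}(iv) to the bounded product $z_nx_k$, and passes to the monotone limit first in $k$, then in $n$. No inequality of the form $|\Phi_M(y)|\leq\Phi_M(|y|)$ is needed; $\widehat{\Phi}_M$ is simply the normal extension on positives. Your route instead proves this Kadison-type inequality, deduces that $\Phi_M$ is a $\|\cdot\|_\Phi$-contraction, and thereby extends $\Phi_M$ to all of $L^1(M,\Phi)$ in one stroke. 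Your approach is more conceptual and yields, as a byproduct, that $\widehat{\Phi}_M$ is a well-defined contraction on the whole of $L^1(M,\Phi)$ (not just on positives); the paper's approach is more elementary, avoiding the polar-decomposition argument altogether and relying only on monotone convergence. Both are valid, and your derivation of $|\Phi_M(y)|\leq\Phi_M(|y|)$ via $(\mathbf{1}-c)|y|(\mathbf{1}-c)^*\geq 0$ together with $c^*c\leq\mathbf{1}$ is sound.
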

\begin{proof} It is sufficient to show that $x\in L_+^1(M,\Phi),~z\in S_+
(\mathcal{A})$ implies $zx\in L^1(M,\Phi)$  and
$\widehat{\Phi}(zx)=\psi(z)\widehat{\Phi}(x),$
$\widehat{\Phi}(\widehat{\Phi}_M(zx))=\widehat{\Phi}(zx).$

Let $z_n=E_n(z)z.$ It is clear that $z_n\in
\mathcal{A}_+,~z_n\uparrow z,~z_nx\in L_+^1(M,\Phi).$ Since
$z_nx=\sqrt{x}z_n\sqrt{x}\uparrow \sqrt{x}z\sqrt{x}=zx,$ we get
$$\psi(z_n)\widehat{\Phi}(x)=\widehat{\Phi}(z_nx)\leq
\widehat{\Phi}(z_{n+1}x)=\psi(z_{n+1})\widehat{\Phi}(x)\uparrow
\psi (z)\widehat{\Phi}(x).$$  Hence, $$\sup\limits_{n\geq m}
\|z_nx-z_mx\|_\Phi= \sup\limits_{n\geq m}
|\widehat{\Phi}(z_nx)-\widehat{\Phi}(z_mx)|\downarrow 0,$$ i.e.
$\{z_nx\}$ is a $(bo)$-Cauchy sequence. By Theorem \ref{3.5.},
there exists $y\in L^1(M,\Phi)$  such that
$\|z_nx-y\|_\Phi\stackrel{(o)}{\longrightarrow}0.$ The inequality
$\Phi(E_\lambda^\bot(|z_nx-y|)\leq \frac{1}{\lambda}\Phi
(|z_nx-y|)$ implies $z_nx\stackrel{\Phi}{\longrightarrow}y.$
Therefore  $y=zx,$ i.e. $zx\in L^1(M,\Phi).$  In addition,
$\psi(z_n)\widehat{\Phi}(x)=\widehat{\Phi}(z_nx)=\|z_nx\|_\Phi\stackrel{t(\mathcal{B})}{\longrightarrow}\|zx\|=\widehat{\Phi}(zx).$
Hence, $\widehat{\Phi}(zx)=\psi(z)\widehat{\Phi}(x).$

Set $x_k=E_k(x)x.$ Then ~$0\leq x_k \uparrow x,~x_k\in M.$ By
virtue of Proposition \ref{3.2.}$(iv),$
$\Phi(z_nx_k)=\Phi(\Phi_M(z_nx_k))=\Phi(z_n\Phi_M(x_k)).$ Since
$(z_nx_k)\uparrow (z_nx)$ as $k\to \infty,$ we have
$\Phi(z_nx_k)\uparrow \widehat{\Phi}(z_nx)$ and
$\Phi(\Phi_M(z_nx_k))\uparrow \widehat{\Phi}
(\widehat{\Phi}_M(z_nx)).$ Therefore $\widehat{\Phi}(z_nx)=
\widehat{\Phi}(\widehat{\Phi}_M(z_nx))$ for all $n=1,2,\dots .$
After switching to the limit as $n\to \infty,$ we obtain
$\widehat{\Phi}(zx)=\widehat{\Phi}(\widehat{\Phi}_M(zx))$
\end{proof}

Let $\Phi$ be an $F_\mathbb{C}$-valued Maharam trace on  $M$ and
let $\Psi$ be a normal $F_\mathbb{C}$-valued trace on $M.$ A trace
$\Psi$ is called {\em absolutely continuous with respect to}
$\Phi$ (notation $\Psi\ll \Phi$) if $ s(\Psi(p)) \leq s(\Phi(p))$
for all $p\in P(M).$ The last condition is equivalent to inclusion
$\Psi(p)\in \{ \Phi(p)\}^{\bot\bot}= s(\Phi(p))S_h(\mathcal{B}),$
$p\in P(M)$ where $B^\bot:=\{x\in S_h(\mathcal{B}): (\forall y\in
B) |x|\wedge |y|=0\}$ for a nonempty subset $B\subset
S_h(\mathcal{B})$ (compare with \cite{Ku1}, 6.1.11).

The next theorem is a non-commutative  version of the
Radon-Niko\-dym-type theorem for Maharam traces.

\begin{thm}\label{3.7.} Let $\Phi$ be an $F_\mathbb{C}$-valued Maharam trace on the
von Neumann algebra $M.$ If $\Psi$ is a normal
$F_\mathbb{C}$-valued trace on $M$ absolutely continuous with
respect to $\Phi,$ then there exists an operator $y\in
L_+^1(M,\Phi)\cap S(Z(M))$ such that
$$\Psi(x)=\widehat{\Phi}(yx)$$ for all $x\in M.$
\end{thm}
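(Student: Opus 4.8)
The goal is a Radon–Nikodym-type density $y$ with $\Psi(x)=\widehat{\Phi}(yx)$ for $x\in M$. My strategy is to first produce the density on the center and then propagate it to all of $M$ via the center-valued trace $\Phi_M$. The key structural tool is Theorem~\ref{3.3.}, which gives $\Phi(x)=\Phi(\mathbf{1})\psi(\mathcal{E}(\Phi_M(x)))$; this shows that $\Phi$ is, up to the isomorphism $\psi$ and the fixed weight $\Phi(\mathbf 1)$, essentially the composition of the conditional expectation $\mathcal{E}:Z(M)\to\mathcal{A}$ with the center-valued trace. Since both $\Phi$ and $\Psi$ are traces, Proposition~\ref{3.2.}$(iv)$ applied to $\Psi$ (and its analogue) lets me reduce everything to a problem about positive normal operators on the commutative algebra $Z(M)$.

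\emph{First step: reduce to the center.} By Proposition~\ref{3.2.}$(iv)$, $\Psi(x)=\Psi(\Phi_M(x))$ and $\Phi(x)=\Phi(\Phi_M(x))$ for all $x\in M$, so it suffices to find $y\in S(Z(M))_+$ with $\Psi(z)=\widehat{\Phi}(yz)$ for all $z\in Z_+(M)$; the modularity relation $\widehat{\Phi}(yx)=\widehat{\Phi}(y\Phi_M(x))$ (which should follow from the trace property of $\widehat\Phi$ together with $y$ being central, compare Proposition~\ref{3.6.}) then extends the identity to all of $M$. Restricting $\Psi$ and $\Phi$ to $Z_h(M)$, I obtain two positive order-continuous operators into $S_h(\mathcal{B})$, and the absolute-continuity hypothesis $s(\Psi(p))\le s(\Phi(p))$ guarantees that $\Psi|_{Z(M)}$ vanishes wherever $\Phi|_{Z(M)}$ does, band by band.

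\emph{Second step: construct $y$ on finite bands.} Because $F$ (hence $\mathcal{B}$) is a Kantorovich–Pinsker space and $\Phi$ is Maharam, I can decompose $\mathbf{1}$ into a family of mutually orthogonal central projections on which things become $\sigma$-finite; on each piece I fix a faithful normal finite numerical trace $\nu$ on $\mathcal{B}$ and pass to the numerical traces $\tau(x)=\nu(\Phi(x)\Phi(\mathbf 1)^{-1})$ and $\sigma(x)=\nu(\Psi(x)\Phi(\mathbf 1)^{-1})$ on $M$, exactly as in the proofs of Propositions~\ref{3.3}~and~\ref{4.2.}. Here $\sigma\ll\tau$ as scalar traces by absolute continuity, so the classical noncommutative Radon–Nikodym theorem for numerical traces supplies a positive central density $y_\nu$. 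The main work is to check that the $y_\nu$ obtained for different $\nu$ and different bands patch together: I would show $y$ is well-defined by testing against central projections and using that $\widehat{\Phi}(yp)=\Psi(p)$ uniquely determines $y$ on each band (faithfulness of $\Phi$ after cutting by the support $s(\Phi_e)$, as in Proposition~\ref{4.2.}), then glue over the orthogonal family to get a single $y\in S(Z(M))_+$.

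\emph{The main obstacle.} The hard part is not the existence on each $\sigma$-finite band, which is classical, but rather (a) verifying $y\in L^1(M,\Phi)$ globally — i.e.\ that the integral $\widehat{\Phi}(yx)$ actually exists in $S(\mathcal{B})$ for every $x\in M$, not merely on each band — and (b) showing the local densities cohere into a genuine measurable operator affiliated with $Z(M)$ rather than just a compatible family. I expect (a) to follow by writing $\Psi(x)=\widehat{\Phi}(yx)$ first for $x\in M_+$, where both sides are positive and the $(o)$-limit defining $\widehat{\Phi}$ is monotone, and then using Proposition~\ref{3.6.} (the modularity $\widehat{\Phi}(zx)=\psi(z)\widehat{\Phi}(x)$ and the tower property $\widehat{\Phi}(\widehat{\Phi}_M(zx))=\widehat{\Phi}(zx)$) to transport the band-wise identities to the whole algebra. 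For (b), the completeness of $(S(\mathcal{B}),t(\mathcal{B}))$ together with the Kantorovich–Pinsker structure of $F$ should let me assemble $y=\sup_i y e_i$ as an element of $S(Z(M))$, with $y\in L^1_+(M,\Phi)$ confirmed by the $(bo)$-completeness established in Theorem~\ref{3.5.}$(ii)$.
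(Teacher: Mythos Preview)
Your reduction and extension steps match the paper exactly: both arguments use Proposition~\ref{3.2.}$(iv)$ to pass to $Z(M)$ via $\Phi_M$, and both finish with the chain
\[
\Psi(x)=\Psi(\Phi_M(x))=\widehat{\Phi}(y\Phi_M(x))=\widehat{\Phi}(\widehat{\Phi}_M(yx))=\widehat{\Phi}(yx)
\]
using Proposition~\ref{3.6.}.

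The middle step is where the routes diverge. The paper does \emph{not} localize to scalar traces. Instead it observes that the restrictions $m=\Phi|_{P(Z(M))}$ and $l=\Psi|_{P(Z(M))}$ are $S_h(\mathcal{B})$-valued completely additive measures on the Boolean algebra $P(Z(M))$, that $m$ is $\psi$-modular in the sense $m(ze)=\psi(z)m(e)$ for $z\in P(\mathcal{A})$ (by Theorem~\ref{3.3.}~3)), and then invokes the ready-made vector-valued Radon--Nikodym theorem of Kusraev (\cite{Ku1}, 6.1.11) for modular measures to obtain $y\in L_+^1(Z(M),m)=L_+^1(Z(M),\Phi)$ with $l(e)=\widehat{\Phi}(ye)$ for every $e\in P(Z(M))$. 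From there one extends to simple elements, to $Z_+(M)$ by monotone limits, and to $Z(M)$ by linearity; no patching is required. Your localization-and-gluing approach is more elementary in that it avoids the machinery of \cite{Ku1}, but it carries real overhead: you must first arrange that the scalar functional $\sigma(x)=\nu(\Psi(x)\Phi(\mathbf{1})^{-1})$ is actually a finite trace (nothing guarantees $\Psi(\mathbf{1})\Phi(\mathbf{1})^{-1}$ is $\nu$-integrable on a given $\sigma$-finite band, so a further spectral decomposition of this ratio in $S_+(\mathcal{B})$ is needed before the classical Radon--Nikodym theorem applies), and you must then carry out the gluing you correctly flag as the main obstacle. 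Both issues are surmountable, but the paper's route sidesteps them entirely by citing a theorem that already encodes the modular structure.
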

\begin{proof} Let $l$ be the restriction of $\Psi$ on the complete Boolean algebra
$P(Z(M)),$  and let $m$ be the restriction of $\Phi$ on $P(Z(M)).$
Obviously, $l$ and $m$ are $S_h(\mathcal{B})$-valued completely
additive measures on $P(Z(M)).$ In addition, $m(ze)=\psi(z)m(e)$
for all $z\in P(\mathcal{A}),~e\in P(Z(M))$ (see Theorem
\ref{3.3.}). Hence, $m$ is a $\psi$-modular measure on $P(Z(M))$
(see \cite{Ku1}, 6.1.9). Since the measure $l$  is absolutely
continuous with respect to $m,$ by the Radon-Nikodym-type theorem
from (\cite{Ku1}, 6.1.11), there exists $y\in
L_+^1(Z(M),m)=L_+^1(Z(M),\Phi)$ such that
$l(e)=\widehat{\Phi}(ye)$ for all $e\in P(Z(M)).$

If $a=\sum\limits_{i=1}^n \lambda_ie_i$ is a simple element from
$Z(M),$ where $\lambda_i\in \mathbb{C},~e_i\in P(Z(M)),~i=1,\dots,
n,$ then $\Psi(a)=\sum\limits_{i=1}^n \lambda_i\Psi(e_i)=
\sum\limits_{i=1}^n
\lambda_i\widehat{\Phi}(ye_i)=\widehat{\Phi}(ya).$ Let $a\in
Z_+(M)$ and $\{a_n\}$ be a sequence of simple elements from
$Z_+(M)$  with $a_n\uparrow a.$ Then $\Psi(a_n) \uparrow \Psi(a),$
$ya_n \uparrow ya,$ and $\widehat{\Phi} (ya_n)\uparrow
\widehat{\Phi}(ya)$ (see the proof of Proposition \ref{3.6.}).
Hence, $\Psi(a)=\widehat{\Phi}(ya)$ for all $a\in Z_+(M).$ Now
using the linearity of traces $\Psi$ and $\Phi,$ we obtain
$\Psi(a)=\widehat{\Phi}(ya)$ for all $a\in M.$

Furthermore, due to  Propositions \ref{3.2.}$(iv)$ and \ref{3.6.}
we get
$$
\Psi(x)=\Psi(\Phi_M(x))= \widehat{\Phi} (y\Phi_M(x))=
\widehat{\Phi}(\widehat{\Phi}_M(yx))=\widehat{\Phi}(yx)
$$
for all $x\in M.$
\end{proof}

\begin{rem} If $\Psi$ is a normal $F_\mathbb{C}$-valued trace on
$M$ and $\Psi\ll \Phi,$ then $\Psi$ possesses the Maharam
property.
\end{rem}

In fact, by Theorem \ref{3.7.}, $\Psi(x)=\widehat{\Phi}(yx)$ for
all $x\in M$ where $y\in L_+^1(M,\Phi)\cap S(Z(M)).$ Let ~$0\neq
x\in M_+,~ f\leq \Psi(x),~f\in S_+(\mathcal{B}),~g\in
S_+(\mathcal{B}),~g\Psi(x)=s(\Psi(x).$ Set $h=gf,~
 z=\psi^{-1}(h),~a=zx.$ Then $0\leq h \leq g\Psi(x)=s(\Psi(x)) \leq
\mathbf{1}_F,~0\leq z\leq \mathbf{1},~0\leq a\leq x$ and
$$\Psi(a)=\widehat{\Phi}(ya)=\widehat{\Phi}(zyx)=\psi(z)\widehat{\Phi}(yx)=h\Psi(x)=fs(\Psi(x))=f.$$


\begin{thebibliography}{77}

\bibitem{Ku1}  Kusraev ~A.G., Dominanted Operators,  Mathematics and its Applications, 519, Kluwer Academic Publishers, Dordrecht,~
2000.~446~p.

\bibitem{Chil.G.1.} Ganiev~I.G., Chilin~V.I. Measurable bundles of non-commutative $L^p$-spaces associated with
center-valued trace  // Mat. Trudy, 4(2001)~ No~2. ~
p.~27--41.~(Russian).

\bibitem{Chil.K.} Chilin ~V.I., Katz ~A.A. On abstract
characterization of non-commutative $L^p$-spaces associated with
center-valued trace // MFAT  2005. v.~11., No~4.~p.~346--355.

\bibitem{SZ.}
Stratila~S., Zsido~L. Lectures on von Neumann algebras,  England
Abacus Press,~1975.~477~p.

\bibitem{Take1.} Takesaki~M. Theory of operator algebras I.~ New York:
Springer,~1979.~415~p.

\bibitem{Seg.}
Segal~I.E. A non-commutative extension of abstract integration
// Ann.~Math.~ 1953.~ No~57.~p.~401--457.

\bibitem{Mur_m}
Muratov~M.A., Chilin~V.I.  Algebras of measurable and locally
measurable operators. ~Kyiv, Pratsi In-ty matematiki NAN
Ukraini.~2007.~V.~69.~390~p.~(Russian).

\bibitem{Yead1}
Yeadon~F.J.  Convergence of measurable operators //
Proc.~Camb.~Phil.~Soc.  ~ 1974.~ v.~74. ~ p.~257--268.

\bibitem{Nels.}
Nelson~E. Notes on non commutative integration // J.~Funct.~Anal.
~1974.~No~15.~p.~103--116.

\bibitem{Fac} Fack~T., Kosaki~H. Generalised s-numbers of $ \tau
$-measurable operators // Pacif. J. Math.~1986,~v.~123, ~
p.~269--300.


\bibitem{Tih} Tikhonov~O.Y. Continuity of operator functions on a von Neumann algebra with respect to topology of convergence in measure.
// Izv. VUZov. Mathematika. ~ 1987.~ No~1.~ p.~77-79.~ (Russian).

\bibitem{Vl}  Vladimirov ~D.A., Boolean Algebras, Nauka, Moscow, ~1969.~319~p. ~(Russian).

\bibitem{Ake} Akemann~C.A., Andersen~T., Pedersen~G.K. Triangle
inequalities in operator algebras // Linear and Multilinear
Algebra,~1982,~v.~11,2  \ p. 167-178.

\bibitem{Ku2} Kusraev ~A.G., Vector Duality and its Applications,
Nauka, Novosibirsk, 1985. ~256~p. ~(Russian).


\end{thebibliography}
\end{document}